  \newtheorem{fact}[theorem]{Fact}
  \newtheorem{pcriterion}[theorem]{Properness Criterion}
\newtheorem{thmalph}{Theorem}
\newtheorem{probalph}[thmalph]{Problem}
\begin{document}

\title*{Intrinsic sound of anti-de Sitter manifolds}
\author{Toshiyuki Kobayashi}
\institute{Toshiyuki Kobayashi \at 
Kavli IPMU and Graduate School of Mathematical Sciences, 
The University of Tokyo, 
\email{toshi@ms.u-tokyo.ac.jp}
}
\maketitle

\abstract{As is well-known 
 for compact Riemann surfaces,
 eigenvalues of the Laplacian
 are distributed discretely 
 and most of eigenvalues vary
 viewed as functions on the Teichm{\"u}ller space.  
We discuss a new feature 
 in the Lorentzian geometry, 
 or more generally,
 in pseudo-Riemannian geometry.  
One of the distinguished features is
 that $L^2$-eigenvalues
 of the Laplacian may be distributed densely in ${\mathbb{R}}$
 in pseudo-Riemannian geometry.  
For three-dimensional anti-de Sitter manifolds, 
we also explain another feature 
 proved in joint with F. Kassel
 [Adv. Math. 2016]
 that there exist countably many $L^2$-eigenvalues
 of the Laplacian 
 that are stable under any small deformation
 of anti-de Sitter structure.  
Partially supported by
        Grant-in-Aid for Scientific Research (A) (25247006), Japan
        Society for the Promotion of Science.  
}

\noindent
\textit{Keywords and phrases:}
Laplacian, 
locally symmetric space,
Lorentzian manifold,
spectral analysis, 
Clifford--Klein form, 
reductive group,
 discontinuous group
\medskip
\noindent
\textit{2010 MSC:}
Primary
22E40, 22E46, 58J50:
Secondary
 11F72, 53C35

\section{Introduction}
\label{sec:1}

Our \lq\lq{common sense}\rq\rq\
for music instruments says:

\[
\mbox{\lq\lq{shorter strings produce a higher pitch
 than longer strings}\rq\rq, }
\]
\[\mbox{\lq\lq{thinner strings produce a higher pitch
 than thicker strings}\rq\rq.}\]

Let us try to 
 \lq\lq{hear the sound
 of pseudo-Riemannian locally symmetric spaces}\rq\rq.  
Contrary to our \lq\lq{common sense}\rq\rq\
 in the Riemannian world, 
 we find a phenomenon
 that compact three-dimensional anti-de Sitter manifolds
 have \lq\lq{intrinsic sound}\rq\rq\
 which is stable under any small deformation.  
This is formulated
 in the framework of spectral analysis
 of anti-de Sitter manifolds, 
 or more generally,
 of pseudo-Riemannian locally symmetric spaces $X_{\Gamma}$.  
In this article, 
we give a flavor
 of this new topic
 by comparing it with the flat case
 and the Riemannian case.

To explain briefly the subject,
 let $X$ be a pseudo-Riemannian manifold,
  and $\Gamma$ a discrete isometry group
 acting properly discontinuously and freely
 on $X$.  
Then the quotient space $X_{\Gamma}:=\Gamma \backslash X$
 carries a pseudo-Riemannian manifold structure
 such that the covering map $X \to X_{\Gamma}$ is isometric.  
We are particularly interested in the case
 where $X_{\Gamma}$ is a pseudo-Riemannian locally symmetric space, 
 see Section \ref{subsec:3B}.

Problems we have in mind are symbolized
 in the following diagram:

\begin{table}[h]
\begin{center}
\begin{tabular}{c|c|c}
&existence problem & deformation {\it{v.s.}} rigidity
\\
\hline
Geometry
&
Does cocompact $\Gamma$ exist? 
&
Higher Teichm{\"u}ller theory {\it{v.s.}} rigidity theorem
\\
& (Section \ref{subsec:4A})
& (Section \ref{subsec:4B})
\\
\hline
Analysis
& Does $L^2$-spectrum exist?
& Whether $L^2$-eigenvalues vary or not
\\
&
(Problem A)
&
(Problem B)
\end{tabular}
\end{center}
\end{table}%

\section{A program}
\label{sec:2}
In \cite{KK11, Adv16, kob09}
 we initiated the study 
 of 
\lq\lq{spectral analysis on pseudo-Riemannian locally symmetric spaces}\rq\rq\
 with focus on the following two problems:
\begin{probalph}
\label{prob:A}
Construct eigenfunctions
 of the Laplacian $\Delta_{X_{\Gamma}}$ on $X_{\Gamma}$.  
Does there exist
a nonzero $L^2$-eigenfunction?
\end{probalph}

\begin{probalph}
\label{prob:B}
Understand the behaviour
 of $L^2$-eigenvalues
 of the Laplacian $\Delta_{X_{\Gamma}}$ on $X_{\Gamma}$
 under small deformation
 of $\Gamma$ inside $G$.  
\end{probalph}

Even when $X_{\Gamma}$ is compact,
 the existence
 of countably many $L^2$-eigenvalues
 is already nontrivial 
 because the Laplacian $\Delta_{X_{\Gamma}}$ is not elliptic
 in our setting.  
We shall discuss in Section \ref{subsec:2B}
 for further difficulties
 concerning Problems \ref{prob:A} and \ref{prob:B}
 when $X_{\Gamma}$ is non-Riemannian.

We may extend these problems
 by considering {\it{joint}} eigenfunctions
 for \lq\lq{invariant differential operators}\rq\rq\
 on $X_{\Gamma}$
 rather than the single operator $\Delta_{X_{\Gamma}}$.  
Here by \lq\lq{invariant differential operators
on ${X_{\Gamma}}$}\rq\rq\
 we mean differential operators 
 that are induced from $G$-invariant ones
 on $X=G/H$.  
In Section \ref{sec:7}, 
 we discuss Problems \ref{prob:A} and \ref{prob:B}
 in this general formulation
 based on the recent joint work \cite{Adv16, KKspec} with F. Kassel.  

\subsection{Known results}
\label{subsec:2A}

Spectral analysis
 on a pseudo-Riemannian locally symmetric space 
$
   X_{\Gamma} = \Gamma \backslash X=\Gamma \backslash G/H
$
 is already deep 
 and difficult
 in the following special cases:
\begin{enumerate}
\item[1)]
(noncommutative harmonic analysis
 on $G/H$)
$\Gamma =\{e\}$.  

In this case,
 the group $G$ acts unitarily
 on the Hilbert space $L^2(X_{\Gamma})=L^2(X)$
 by translation
$
  f(\cdot) \mapsto f(g^{-1}\cdot)
$, 
and the irreducible decomposition of $L^2(X)$
 ({\it{Plancherel-type formula}})
is essentially equivalent to the spectral analysis
 of $G$-invariant differential operators
 when $X$ is a semisimple symmetric space.  
Noncommutative harmonic analysis
 on semisimple symmetric spaces $X$
 has been developed extensively by the work 
 of Helgason, Flensted-Jensen, 
Matsuki--Oshima--Sekiguchi, 
Delorme, 
van den Ban--Schlichtkrull
 among others as a generalization
 of Harish-Chandra's earlier work
 on the regular representation $L^2(G)$
 for group manifolds.  
\item[2)]
(automorphic forms)
$H$ is compact and $\Gamma$ is arithmetic.  

If $H$ is a maximal compact subgroup of $G$, 
 then $X_{\Gamma}=\Gamma \backslash G/H$ is a Riemannian locally symmetric space
 and the Laplacian $\Delta_{X_{\Gamma}}$ is an elliptic differential operator.  
Then there exist infinitely many $L^2$-eigenvalues
 of $\Delta_{X_{\Gamma}}$
 if $X_{\Gamma}$ is compact 
 by the general theory
 for compact Riemannian manifolds
 (see Fact \ref{fact:RLop}).  
If furthermore $\Gamma$ is irreducible,
 then Weil's local rigidity theorem 
\cite{Weil} states
that nontrivial deformations exist
 only when $X$ is the hyperbolic plane
 $SL(2,{\mathbb{R}})/SO(2)$, 
 in which case compact quotients $X_{\Gamma}$
 have a classically-known deformation space modulo
 conjugation, 
 {\it{i.e.}}, 
their Teichm{\"u}ller space.  
Viewed as a function
 on the Teichm{\"u}ller space, 
 $L^2$-eigenvalues vary analytically
\cite{BC, W}, 
 see Fact \ref{fact:6.2}.

Spectral analysis on $X_{\Gamma}$
 is closely related to the theory of automorphic forms
 in the Archimedian place
 if $\Gamma$ is an arithmetic subgroup. 

\item[3)]
(abelian case)
$G={\mathbb{R}}^{p+q}$
 with $H=\{0\}$
 and $\Gamma = {\mathbb{Z}}^{p+q}$.  

We equip $X=G/H$
 with the standard flat pseudo-Riemannian structure
 of signature $(p,q)$
 (see Example \ref{ex:Rpq}).  
In this case, 
$G$ is abelian, 
 but $X=G/H$ is non-Riemannian.  
This is seemingly easy, 
 however,
 spectral analysis
 on the $(p+q)$-torus
 ${\mathbb{R}}^{p+q}/{\mathbb{Z}}^{p+q}$
 is much involved, as we shall observe a connection with Oppenheim's conjecture
 (see Section \ref{subsec:5B}).  
\end{enumerate}

\subsection{Difficulties in the new settings}
\label{subsec:2B}

If we try to attack a problem
 of spectral analysis on $\Gamma \backslash G/H$
 in the more general case
 where $H$ is noncompact
 and $\Gamma$ is infinite, 
then new difficulties 
 may arise from several points of view:
\begin{enumerate}
\item[(1)]
Geometry. \enspace
The $G$-invariant pseudo-Riemannian structure on $X=G/H$ 
 is not Riemannian anymore,
 and discrete groups
 of isometries of $X$
 do not always act properly discontinuously
 on such $X$.  
\item[(2)]
Analysis. \enspace
The Laplacian $\Delta_X$ on $X_{\Gamma}$
 is not an elliptic differential operator.  
Furthermore, 
 it is not clear 
 if $\Delta_X$ has a self-adjoint extension 
 on $L^2(X_{\Gamma})$.  
\item[(3)]
Representation theory. \enspace
If $\Gamma$ acts properly discontinuously on $X=G/H$
 with $H$ noncompact, 
 then the volume of $\Gamma \backslash G$ is infinite, 
 and the regular representation $L^2(\Gamma \backslash G)$
 may have infinite multiplicities.  
In turn,  
 the group $G$ may not have a good control
 of functions on $\Gamma \backslash G$.  
Moreover $L^2(X_{\Gamma})$
 is not a subspace of $L^2(\Gamma \backslash G)$
 because $H$ is noncompact.  
All these observations suggest 
that an application of the representation theory
 of $L^2(\Gamma \backslash G)$
 to spectral analysis on $X_{\Gamma}$
is rather limited 
 when $H$ is noncompact.  
\end{enumerate}

Point (1) creates some underlying difficulty to Problem \ref{prob:B}:
we need to consider locally symmetric spaces $X_{\Gamma}$
 for which proper discontinuity of the action
 of $\Gamma$ on $X$
 is preserved 
 under small deformations
 of $\Gamma$ in $G$.  
This is nontrivial.  
This question was first studied 
 by the author \cite{kob93, kob98}. 
See \cite{Ka12} for further study. 
An interesting aspect of the case of noncompact $H$
 is that there are more examples where nontrivial deformations
 of compact quotients exist than for compact $H$
({\textit{cf}}. Weil's local rigidity theorem \cite{Weil}).  
Perspectives from Point (1) will be discussed in Section \ref{sec:4}.

Point (2) makes Problem \ref{prob:A} nontrivial.  
It is not clear 
 if the following well-known properties
 in the {\it{Riemannian}} case holds
 in our setting in the {\it{pseudo-Riemannian}} case.  

\begin{fact}
\label{fact:RLop}
Suppose $M$ is a compact Riemannian manifold.  
\begin{enumerate}
\item[{\rm{(1)}}]
The Laplacian $\Delta_M$ extends
 to a self-adjoint operator on $L^2(M)$.  
\item[{\rm{(2)}}]
There exist infinitely many $L^2$-eigenvalues of $\Delta_M$.  
\item[{\rm{(3)}}]
An eigenfunction of $\Delta_M$ is infinitely differentiable.  
\item[{\rm{(4)}}]
Each eigenspace of $\Delta_M$ is finite-dimensional.  
\item[{\rm{(5)}}]
The set of $L^2$-eigenvalues
 is discrete in ${\mathbb{R}}$. 
\end{enumerate}
\end{fact}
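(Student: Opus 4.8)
\section*{Proof proposal}

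The plan is to reduce all five assertions to the spectral theorem for compact self-adjoint operators, the only genuine analytic input being the ellipticity of $\Delta_M$. First I would fix a sign convention so that $\Delta_M$ is nonnegative, and note via Green's formula that $\Delta_M$ is symmetric on $C^\infty(M)$, there being no boundary contributions since $M$ is compact without boundary. The crucial step is to show that for a large enough constant $c>0$ the shifted operator $\Delta_M+c$ is invertible, with inverse $R=(\Delta_M+c)^{-1}$ a bounded, self-adjoint, and \emph{compact} operator on $L^2(M)$. Concretely, the symmetric bilinear form $a(u,v)=\langle \nabla u,\nabla v\rangle_{L^2}+c\langle u,v\rangle_{L^2}$ is coercive on the Sobolev space $H^1(M)$, so the Lax--Milgram theorem produces a weak solution $u\in H^1(M)$ of $(\Delta_M+c)u=f$ for every $f\in L^2(M)$, defining $R$ and establishing its self-adjointness; elliptic regularity then upgrades $u$ to $H^2(M)$, so that $\Delta_M$ with domain $H^2(M)$ is the asserted self-adjoint extension, giving part (1).

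For compactness of $R$ I would combine coercivity with a compact embedding. G{\aa}rding's inequality yields the estimate $\|u\|_{H^1}^2 \lesssim a(u,u)=\langle (\Delta_M+c)u,u\rangle$, so that $R$ maps $L^2(M)$ boundedly into $H^1(M)$; the Rellich--Kondrachov theorem then gives that the inclusion $H^1(M)\hookrightarrow L^2(M)$ is compact precisely because $M$ is compact. Composing, $R$ is a compact self-adjoint operator on $L^2(M)$.

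With $R$ compact and self-adjoint, the Hilbert--Schmidt spectral theorem furnishes an orthonormal basis of $L^2(M)$ consisting of eigenfunctions of $R$, with real eigenvalues $\mu_1\ge \mu_2\ge\cdots\to 0$, each nonzero one of finite multiplicity. Translating back through $\lambda_n=\mu_n^{-1}-c$ yields the eigenvalues of $\Delta_M$: each eigenspace is finite-dimensional, giving part (4); and since $\mu_n\to 0$ there are infinitely many distinct $\lambda_n$ accumulating only at $+\infty$, giving parts (2) and (5). Finally, part (3) follows by elliptic regularity: an $L^2$-eigenfunction $u$ solves $\Delta_M u=\lambda u$, and bootstrapping the elliptic estimate $\|u\|_{H^{k+2}}\lesssim \|\Delta_M u\|_{H^k}+\|u\|_{H^k}$ shows $u\in H^k(M)$ for every $k$, whence $u\in C^\infty(M)$ by the Sobolev embedding theorem.

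The main obstacle, and the sole place where the Riemannian (elliptic) hypothesis is essential, is the compactness of the resolvent via G{\aa}rding's coercive estimate; this inequality fails in the indefinite-signature setting, which is exactly why the analogues of (1)--(5) become problematic in the pseudo-Riemannian case, as emphasized in Point~(2) of Section~\ref{subsec:2B}.
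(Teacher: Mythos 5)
Your argument is correct and complete: the paper states Fact \ref{fact:RLop} as standard background with no proof of its own, and your route (symmetry via Green's formula, invertibility of $\Delta_M+c$ by Lax--Milgram, compactness of the resolvent via Rellich--Kondrachov, the spectral theorem for compact self-adjoint operators, and an elliptic bootstrap for smoothness) is exactly the standard textbook proof. Your closing remark correctly identifies ellipticity as the ingredient that fails in the pseudo-Riemannian setting, which is precisely the point the paper is making in Section \ref{subsec:2B}.
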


\begin{remark}
\label{remm:RLop}
We shall see 
 that  the third to fifth properties 
 of Fact \ref{fact:RLop} may fail 
 in the pseudo-Riemannian case,
 {\it{e.g.}}, 
 Example \ref{ex:n2} for (3) and (4), 
 and $M={\mathbb{R}}^{2,1}/{\mathbb{Z}}^3$
 (Theorem \ref{thm:Oppen}) for (5).  
\end{remark}

In spite of these difficulties,
 we wish to reveal a mystery of spectral analysis
 of pseudo-Riemannian locally homogeneous spaces
 $X_{\Gamma}=\Gamma \backslash G/H$.  
We shall discuss self-adjoint extension
 of the Laplacian 
 in the pseudo-Riemannian setting
 in Theorem \ref{thm:7.2}, 
 and the existence of countable many $L^2$-eigenvalues
 in Theorems \ref{thm:6.1}, 
 \ref{thm:7.1} and \ref{thm:7.2}.  

\section{Pseudo-Riemannian manifold}
\label{sec:3}

\subsection{Laplacian on pseudo-Riemannian manifolds}
\label{subsec:3A}

A {\it{pseudo-Riemannian manifold}} $M$ is a smooth manifold
 endowed with a smooth, nondegenerate, 
 symmetric bilinear tensor $g$
 of signature $(p,q)$
 for some $p$, $q$ $\in {\mathbb{N}}$.  
$(M,g)$ is a Riemannian manifold
 if $q=0$, 
 and is a Lorentzian manifold if $q=1$.  
The metric tensor $g$ induces a Radon measure $d \mu$ on $X$, 
 and the divergence $\operatorname{div}$.  
Then the Laplacian 
\[
  \Delta_M := \operatorname{div} \operatorname{grad}, 
\]
is a differential operator
 of second order
 which is a symmetric operator
 on the Hilbert space $L^2(X,d \mu)$. 

\begin{example}
\label{ex:Rpq}
Let $(M,g)$ be the standard flat pseudo-Riemannian manifold:
\[
 {\mathbb{R}}^{p,q}
  :=({\mathbb{R}}^{p+q}, dx_{1}^2 + \cdots + dx_{p}^2 - dx_{p+1}^2 -\cdots -dx_{p+q}^2 ).  
\]
Then the Laplacian takes the form 
\[
   \Delta_{{\mathbb{R}}^{p,q}}
   =
   \frac{\partial^2}{\partial x_{1}^2}+ \cdots + \frac{\partial^2}{\partial x_{p}^2}
-\frac{\partial^2}{\partial x_{p+1}^2}-\cdots -\frac{\partial^2}{\partial x_{p+q}^2}.  
\]
\end{example}
In general, 
 $\Delta_M$ is an elliptic differential operator
 if $(M,g)$ is Riemannian,  
and is a hyperbolic operator
 if $(M,g)$ is Lorentzian.

\subsection{Homogeneous pseudo-Riemannian manifolds}
\label{subsec:3B}

A typical example
 of pseudo-Riemannian manifolds $X$
 with \lq\lq{large}\rq\rq\ isometry groups
 is semisimple symmetric spaces,
 for which the infinitesimal classification
 was accomplished
 by M. Berger
 in 1950s.  
In this case,
 $X$ is given as a homogeneous space $G/H$
 where $G$ is a semisimple Lie group 
 and $H$ is an open subgroup of the fixed point group
 $G^{\sigma}=\{g \in G: \sigma g=g\}$
 for some involutive automorphism $\sigma$ of $G$.  
In particular,
 $G \supset H$ are a pair of reductive Lie groups.

More generally, 
 we say $G/H$ is a {\it{reductive homogeneous space}}
 if $G \supset H$ are a pair of real reductive algebraic groups.  
Then we have the following:
\begin{proposition}
\label{prop:GHg}
Any reductive homogeneous space
 $X=G/H$
 carries a pseudo-Riemannian structure
 such that 
 $G$ acts on $X$ by isometries.  
\end{proposition}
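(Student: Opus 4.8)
The plan is to construct the $G$-invariant pseudo-Riemannian metric on $X=G/H$ by starting at the base point $o=eH$ and transporting a suitable bilinear form around by the $G$-action. The key observation is that a $G$-invariant tensor on $G/H$ is determined by its value at $o$, and that value must be an $H$-invariant bilinear form on the tangent space $T_o X$. So the problem reduces to producing a nondegenerate, symmetric, $\mathrm{Ad}(H)$-invariant bilinear form on the vector space $\mathfrak{g}/\mathfrak{h}$, where $\mathfrak{g}$ and $\mathfrak{h}$ are the Lie algebras of $G$ and $H$. Indeed, $T_o X$ is canonically identified with $\mathfrak{g}/\mathfrak{h}$, and the isotropy representation of $H$ on $T_o X$ is precisely the representation induced by $\mathrm{Ad}$ on the quotient $\mathfrak{g}/\mathfrak{h}$.

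First I would invoke the Killing form $B$ of $\mathfrak{g}$. Since $G$ is reductive, $B$ is nondegenerate on the semisimple part and can be supplemented on the center to yield a nondegenerate, symmetric, $\mathrm{Ad}(G)$-invariant bilinear form $\langle\,\cdot\,,\cdot\,\rangle$ on $\mathfrak{g}$; because $H$ is also reductive in $G$, the restriction of this form to $\mathfrak{h}$ is again nondegenerate. This nondegeneracy is the crucial input: it guarantees the orthogonal decomposition $\mathfrak{g}=\mathfrak{h}\oplus\mathfrak{q}$, where $\mathfrak{q}=\mathfrak{h}^{\perp}$, and the restriction of $\langle\,\cdot\,,\cdot\,\rangle$ to $\mathfrak{q}$ is then nondegenerate as well. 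Since $\mathfrak{q}$ is naturally identified with $\mathfrak{g}/\mathfrak{h}\cong T_o X$, this furnishes a nondegenerate symmetric form $g_o$ on $T_o X$. Its $\mathrm{Ad}(H)$-invariance follows from the $\mathrm{Ad}(G)$-invariance of $\langle\,\cdot\,,\cdot\,\rangle$ together with the fact that $\mathrm{Ad}(H)$ preserves both $\mathfrak{h}$ and hence its orthogonal complement $\mathfrak{q}$.

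Next I would spread $g_o$ over all of $X$ by the group action: for $x=gH$ define $g_x$ by pulling back $g_o$ through the differential of left translation by $g^{-1}$. The $\mathrm{Ad}(H)$-invariance of $g_o$ is exactly what makes this definition independent of the choice of representative $g$, and smoothness follows from the smoothness of the $G$-action. By construction $G$ acts by isometries, the tensor is symmetric and nondegenerate at every point, and its signature $(p,q)$ is constant since it equals the signature of $g_o$. This produces the desired pseudo-Riemannian structure.

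\emph{The hard part} is verifying that the restriction of the invariant form to $\mathfrak{h}$ is nondegenerate, which is what allows the orthogonal splitting $\mathfrak{g}=\mathfrak{h}\oplus\mathfrak{q}$ in the first place; for a general subalgebra this can fail, and it is precisely the reductivity of the pair $G\supset H$ that rescues us. For the Killing form of a semisimple $\mathfrak{g}$ one uses that a reductive subalgebra is the fixed-point set of a compatible Cartan involution, so the form is definite (hence nondegenerate) on the compact part and negative of definite on the noncompact part of $\mathfrak{h}$; in the general reductive case one extends this argument after adjusting the form on the center. Once nondegeneracy on $\mathfrak{h}$ is secured, everything else is a routine transport-of-structure argument, and the signature $(p,q)$ appearing in Proposition~\ref{prop:GHg} is read off from the signature of $\langle\,\cdot\,,\cdot\,\rangle$ restricted to $\mathfrak{q}$.
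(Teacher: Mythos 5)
Your proposal is correct and follows essentially the same route as the paper: choose an $\operatorname{Ad}(G)$-invariant form compatible with a Cartan involution $\theta$ preserving $H$ (Mostow's theorem), deduce nondegeneracy on $\mathfrak{h}$ from the decomposition $\mathfrak{h}=(\mathfrak{h}\cap\mathfrak{k})+(\mathfrak{h}\cap\mathfrak{p})$, descend to $\mathfrak{g}/\mathfrak{h}\cong T_o X$, and transport by left translation. One small slip: a reductive subalgebra is \emph{stable under} a compatible Cartan involution, not its \emph{fixed-point set} (that would force $\mathfrak{h}\subset\mathfrak{k}$); your subsequent use of the compact and noncompact parts of $\mathfrak{h}$ shows you intended stability.
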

\begin{proof}
By a theorem of Mostow,
 we can take a Cartan involution $\theta$
 of $G$
such that $\theta H=H$.  
Then $K:=G^{\theta}$
 is a maximal compact subgroup of $G$, 
 and $H \cap K$ is that of $H$.  
Let ${\mathfrak {g}}= {\mathfrak {k}}+{\mathfrak {p}}$
 be the corresponding Cartan decomposition 
 of the Lie algebra ${\mathfrak {g}}$ of $G$.  
Take an $\operatorname{Ad}(G)$-invariant nondegenerate symmetric 
 bilinear form $\langle \, , \, \rangle$
 on ${\mathfrak {g}}$ such that 
 $\langle \, , \, \rangle|_{{\mathfrak {k}} \times {\mathfrak {k}}}$
 is negative definite, 
 $\langle \, , \, \rangle|_{{\mathfrak {p}} \times {\mathfrak {p}}}$
 is positive definite, 
 and  ${\mathfrak {k}}$ and ${\mathfrak {p}}$ are orthogonal to each other.  
(If $G$ is semisimple, 
then we may take $\langle \, , \, \rangle$ to be the Killing form of ${\mathfrak {g}}$.  )

Since $\theta H=H$, 
 the Lie algebra ${\mathfrak {h}}$ of $H$ is decomposed
 into a direct sum ${\mathfrak {h}}=({\mathfrak {h}} \cap {\mathfrak {k}})+({\mathfrak {h}} \cap {\mathfrak {p}})$, 
 and therefore the bilinear form $\langle \, , \, \rangle$ is non-degenerate
 when restricted to ${\mathfrak {h}}$.  
Then $\langle \, , \, \rangle$ induces an $\operatorname{Ad}(H)$-invariant 
nondegenerate symmetric bilinear form 
 $\langle \, , \, \rangle_{{\mathfrak {g}}/{\mathfrak {h}}}$
 on the quotient space ${\mathfrak {g}}/{\mathfrak {h}}$, 
 with which we identify 
 the tangent space $T_o(G/H)$
at the origin $o=eH \in G/H$.  
Since the bilinear form $\langle \, , \, \rangle_{{\mathfrak {g}}/{\mathfrak {h}}}$
 is $\operatorname{Ad}(H)$-invariant,
 the left translation of this form
 is well-defined and gives a pseudo-Riemannian structure $g$ on $G/H$
 of signature 
 $(\dim {\mathfrak {p}}/{\mathfrak {h}} \cap {\mathfrak {p}}, 
   \dim {\mathfrak {k}}/{\mathfrak {h}} \cap {\mathfrak {k}})$.  
By the construction, 
 the group $G$ acts on the pseudo-Riemannian manifold $(G/H, g)$
 by isometries.  
\qed
\end{proof}

\subsection{Pseudo-Riemannian manifolds
 with constant curvature, 
Anti-de Sitter manifolds}
\label{subsec:3C}
Let $Q_{p,q}(x):=x_{1}^2 + \cdots + x_{p}^2 - x_{p+1}^2-\cdots -x_{p+q}^2$
 be a quadratic form 
 on ${\mathbb{R}}^{p+q}$
 of signature $(p,q)$, 
 and we denote by $O(p,q)$ the indefinite orthogonal group 
 preserving the form $Q_{p,q}$.  
We define two hypersurfaces $M_{\pm}^{p,q}$ in ${\mathbb{R}}^{p+q}$
 by 
\[
M_{\pm}^{p,q}
:=
\{
 x \in {\mathbb{R}}^{p+q}:
Q_{p,q}(x)= \pm 1
\}.  
\]
By switching $p$ and $q$, 
 we have an obvious diffeomorphism
\[
M_{+}^{p,q} \simeq M_{-}^{q,p}.  
\]
The flat pseudo-Riemannian structure ${\mathbb{R}}^{p,q}$
 (Example \ref{ex:Rpq})
 induces a pseudo-Riemannian structure on the hypersurface $M_{+}^{p,q}$
 of signature $(p-1,q)$
 with constant curvature 1, 
and that on $M_{-}^{p,q}$ 
 of signature $(p,q-1)$ with constant curvature $-1$.  

The natural action
 of the group $O(p,q)$ 
on ${\mathbb{R}}^{p,q}$
 induces an isometric 
 and transitive action on the hypersurfaces $M_{\pm}^{p,q}$, 
 and thus they are expressed
 as homogeneous spaces:
\[
M_{+}^{p,q} \simeq O(p,q)/O(p-1,q), 
\quad
M_{-}^{p,q} \simeq O(p,q)/O(p,q-1), 
\]
giving examples
 of pseudo-Riemannian homogeneous spaces
 as in Proposition \ref{prop:GHg}.

The {\it{anti-de Sitter space}} 
$
   \operatorname{Ad S}^n=M_-^{n-1,2}
$
 is a model space
 for $n$-dimensional Lorentzian manifolds
 of constant negative sectional curvature, 
 or {\it{anti-de Sitter $n$-manifolds}}.  
This is a Lorentzian analogue
 of the real hyperbolic space
 $H^n$.  
For the convenience of the reader,
 we list model spaces
 of Riemannian and Lorentzian manifolds
 with constant positive, 
 zero,
 and negative curvatures.

Riemannian manifolds with constant curvature: 
\begin{alignat*}{3}
S^{n}&=\ M_+^{n+1,0} \simeq && O(n+1)/O(n) &&:\text{standard sphere}, 
\\
{\mathbb{R}}^{n}&  &&  &&:\text{Euclidean space}, 
\\
H^{n}&=\ M_-^{n,1} \simeq && O(1,n)/O(n) &&:\text{hyperbolic space}, 
\intertext{Lorentzian manifolds with constant curvature:}
\operatorname{d S}^{n}&=\ M_+^{n,1} \simeq && O(n,1)/O(n-1,1) &&:\text{de Sitter space}, 
\\
{\mathbb{R}}^{n-1,1}&  &&  &&:\text{Minkowski space}, 
\\
\operatorname{AdS}^{n}&=\ M_-^{n-1,2} \simeq && O(2,n-1)/O(1,n-1) &&:\text{anti-de Sitter space}, 
\end{alignat*}
\section{Discontinuous groups for pseudo-Riemannian manifolds}
\label{sec:4}

\subsection{Existence problem
 of compact Clifford--Klein forms}
\label{subsec:4A}
Let $H$ be a closed subgroup 
 of a Lie group $G$, 
 and $X=G/H$, 
 and $\Gamma$ a discrete subgroup of $G$.  
If $H$ is compact,
 then the double coset space $\Gamma \backslash G/H$
 becomes a $C^{\infty}$-manifold 
 for any torsion-free discrete subgroup $\Gamma$ of $G$.  
However, 
 we have to be careful 
 for noncompact $H$, 
 because not all discrete subgroups
 acts properly discontinuously on $G/H$, 
 and $\Gamma \backslash G/H$ may not be Hausdorff
 in the quotient topology.  
We illustrate this feature
 by two general results: 
\begin{fact}
\label{fact:4.1}
\begin{enumerate}
\item[{\rm{(1)}}]
{\rm{(Moore's ergodicity theorem \cite{Moore})}}
\enspace
Let $G$ be a simple Lie group,
 and $\Gamma$ a lattice.  
Then $\Gamma$ acts ergodically 
 on $G/H$
 for any noncompact closed subgroup $H$.  
In particular,
 $\Gamma \backslash G/H$ is non-Hausdorff.  
\item[{\rm{(2)}}]
{\rm{(Calabi--Markus phenomenon (\cite{CM, kob89}))}}
Let $G$ be a reductive Lie group,
 and $\Gamma$ an infinite discrete subgroup.  
Then $\Gamma \backslash G/H$ is non-Hausdorff 
 for any reductive subgroup $H$
 with 
$
   \operatorname{rank}_{\mathbb{R}} G
  = 
   \operatorname{rank}_{\mathbb{R}} H.  
$
\end{enumerate}
\end{fact}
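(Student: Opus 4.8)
The plan is to treat the two parts by independent mechanisms: an ergodicity/vanishing-of-matrix-coefficients argument for (1), and a Cartan-projection/properness argument for (2).

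For (1), I would first exploit the symmetry of the double coset: $\Gamma$-invariant measurable subsets of $G/H$ are in bijection with $(\Gamma\times H)$-invariant subsets of $G$, hence with $H$-invariant subsets of $\Gamma\backslash G$. Thus $\Gamma$ acts ergodically on $G/H$ if and only if $H$ acts ergodically on $\Gamma\backslash G$. Since $\Gamma$ is a lattice, $\Gamma\backslash G$ carries a finite $G$-invariant measure, and I would study the regular representation $\pi$ of $G$ on the orthogonal complement $L^2_0(\Gamma\backslash G)$ of the constants, which has no nonzero $G$-fixed vector. Ergodicity of $H$ is the assertion that $\pi|_H$ has no nonzero fixed vector. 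Suppose $f\in L^2_0(\Gamma\backslash G)$ were $H$-invariant and nonzero; then $\langle\pi(h)f,f\rangle=\|f\|^2$ for all $h\in H$. The key input is the Howe--Moore vanishing theorem: since $G$ is simple with finite center and $\pi$ has no invariant vectors, all of its matrix coefficients vanish at infinity. As $H$ is noncompact I may let $h\to\infty$ in $H$, forcing $\langle\pi(h)f,f\rangle\to 0$ and hence $f=0$, a contradiction. This yields ergodicity. For the non-Hausdorff conclusion I would argue that ergodicity makes almost every $\Gamma$-orbit dense; were $\Gamma\backslash G/H$ Hausdorff, a continuous function separating two distinct orbit classes would pull back to a nonconstant $\Gamma$-invariant measurable function, contradicting ergodicity.

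For (2), I would work with a Cartan decomposition $G=K(\exp\mathfrak{a}^+)K$ and the associated Cartan projection $\mu:G\to\mathfrak{a}^+$. The central tool is the author's properness criterion \cite{kob89}: for reductive subgroups, $\Gamma$ acts properly on $G/H$ precisely when $\mu(\Gamma)$ and $\mu(H)$ diverge, in the sense that $\mu(\Gamma)\cap(\mu(H)+C)$ is bounded for every compact $C\subset\mathfrak{a}$. The point is then to show that the rank hypothesis obstructs this divergence. Equal real rank $\operatorname{rank}_{\mathbb{R}}H=\operatorname{rank}_{\mathbb{R}}G$ means a maximal split torus of $H$ is maximal in $G$, and it is exactly this condition \cite{kob89} that forces $\mu(H)$ to be \emph{cobounded} in the Weyl chamber: there is $R>0$ with $\mathfrak{a}^+\subset\mu(H)+B(0,R)$. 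On the other hand, since $\mu$ is proper and $\Gamma$ is infinite and discrete, $\mu(\Gamma)$ is unbounded. Hence $\mu(\Gamma)\subset\mu(H)+B(0,R)$ is unbounded, the criterion fails, and the $\Gamma$-action is not proper; for an infinite discrete group this produces accumulating orbits and thus non-Hausdorffness of $\Gamma\backslash G/H$.

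The main obstacle in (1) is the Howe--Moore theorem, whose proof (via the asymptotics of matrix coefficients and the Mautner phenomenon) is the genuinely hard analytic step, everything else being formal. In (2) the substantive points are the properness criterion \cite{kob89} relating proper actions to the coarse geometry of Cartan projections, together with the verification that equal real rank makes $\mu(H)$ cobounded in the positive Weyl chamber; once these are in hand the conclusion is immediate.
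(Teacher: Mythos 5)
The paper offers no proof of this statement: it is quoted as a known \emph{Fact}, with (1) attributed to Moore and (2) to Calabi--Markus and to \cite{kob89}, so the only meaningful comparison is with the arguments in those references. Your proof of (1) is the standard modern route: the duality between $\Gamma$-ergodicity on $G/H$ and $H$-ergodicity on $\Gamma\backslash G$, followed by vanishing of matrix coefficients of $L^2_0(\Gamma\backslash G)$. Note that Moore's original 1966 argument went through the Mautner phenomenon and the unitary dual rather than the later Howe--Moore theorem, and that the Howe--Moore input requires $G$ to have finite center (harmless here, but worth saying). Your proof of (2) is essentially the argument of \cite{kob89}: with the paper's normalization, $\operatorname{rank}_{\mathbb{R}}H=\operatorname{rank}_{\mathbb{R}}G$ forces $\mu(H)$ to be not merely cobounded but \emph{all} of $\overline{\mathfrak{a}_+}$, equivalently $G=KHK$; since $\mu$ is proper and $\Gamma$ is infinite discrete, $\mu(\Gamma)$ is unbounded inside $\mu(H)$ and the properness criterion fails.

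The one genuine gap is the last step of (2). ``The action is not proper, hence the quotient is non-Hausdorff'' is not a formal implication for discrete group actions in general, and as written you only invoke failure of the properness criterion. What the equal-rank hypothesis actually gives is much stronger: from $G=KHK$, the single compact set $S=K\cdot o\subset G/H$ satisfies $\gamma S\cap S\neq\emptyset$ for \emph{every} $\gamma\in G$. From this, choose $x_\gamma\in S$ with $\gamma x_\gamma\in S$ for infinitely many $\gamma$, extract convergent subsequences $x_{\gamma_n}\to x$, $\gamma_n x_{\gamma_n}\to y$ in $S$, and run exactly the separation argument you used for the non-Hausdorff conclusion in (1): any saturated open neighbourhoods of $\Gamma x$ and $\Gamma y$ must meet (after a harmless adjustment if $\Gamma x=\Gamma y$). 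With that accumulation argument supplied, both parts are correct and consistent with the cited proofs.
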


In fact,
 determining which groups act properly discontinuously 
 on reductive homogeneous spaces $G/H$
 is a delicate problem, 
 which was first considered 
 in full generality 
 by the author;
 we refer to \cite[Section 3.2]{ky05}
 for a survey.

Suppose now a discrete subgroup $\Gamma$ acts
 properly discontinuously and freely 
 on $X=G/H$.  
Then the quotient space
\[
  X_{\Gamma} := \Gamma \backslash X
              \simeq \Gamma \backslash G/H
\]
carries a $C^{\infty}$-manifold structure
 such that the quotient map $p:X \to X_{\Gamma}$ is a covering, 
 through which $X_{\Gamma}$ inherits
 any $G$-invariant local geometric structure on $X$.  
We say $\Gamma$ is a {\it{discontinuous group for }}$X$
 and $X_{\Gamma}$ is a {\it{Clifford--Klein form}}
 of $X=G/H$.

\begin{example}
\begin{enumerate}
\item[{\rm{(1)}}]
If $X=G/H$ is a reductive homogeneous space,
 then any Clifford--Klein form $X_{\Gamma}$ carries
 a pseudo-Riemannian structure
 by Proposition \ref{prop:GHg}.  
\item[{\rm{(2)}}]
If $X=G/H$ is a semisimple symmetric space,
 then any Clifford--Klein form $X_{\Gamma}=\Gamma \backslash G/H$
 is a pseudo-Riemannian locally symmetric space, 
 namely,
 the (local) geodesic symmetry 
 at every $p \in X_{\Gamma}$
 with respect to the Levi-Civita connection
 is locally isometric.  
\end{enumerate}
\end{example}

By {\it{space forms}}, 
 we mean pseudo-Riemannian manifolds of constant sectional curvature.  
They are examples
 of pseudo-Riemannian locally symmetric spaces.  
For simplicity, 
 we shall assume
 that they are geodesically complete.  

\begin{example}
Clifford--Klein forms
 of $M_+^{p+1,q}=O(p+1,q)/O(p,q)$
(respectively,
 $M_-^{p,q+1}=O(p,q+1)/O(p,q)$)
 are pseudo-Riemannian space forms
 of signature $(p,q)$
 with positive 
 (respectively,
 negative)
 curvature.  
Conversely,
 any (geodesically complete) pseudo-Riemannian space form
 of signature $(p,q)$ 
 is of this form
 as far as $p \ne 1$
 for positive curvature
 or $q \ne 1$
 for negative curvature.  
\end{example}

A general question
 for reductive homogeneous spaces $G/H$ is:
\begin{question}
\label{q:1}
Does compact Clifford--Klein forms
 of $G/H$ exist?
\end{question}
or equivalently, 
\begin{question}
\label{q:2}
Does there exist a discrete subgroup 
 $\Gamma$ of $G$ acting cocompactly
 and properly discontinuously 
 on $G/H$?
\end{question}

This question has an affirmative answer
 if $H$ is compact 
 by a theorem of Borel.  
In the general setting
 where $H$ is noncompact,
 the question relates with a 
 \lq\lq{global theory}\rq\rq\
 of pseudo-Riemannian geometry:
{\sl{how local pseudo-Riemannian homogeneous structure
 affects the global nature
 of manifolds?}}  
A classic example is 
{\it{space form problem}}
 which asks the global properties
({\it{e.g.}} compactness,
 volume, 
fundamental groups, 
{\it{etc.}})
 of a pseudo-Riemannian manifold
 of constant curvature
 (local property).  
The study of discontinuous groups 
 for 
$
  M_+^{p+1,q}
$
and 
$
  M_-^{p,q+1}
$
shows the following results
 in pseudo-Riemannian space forms
 of signature $(p,q)$:
\begin{fact}
Space forms of positive curvature are
\begin{enumerate}
\item[{\rm{(1)}}]
always closed if $q=0$, 
{\it{i.e.}}, 
 sphere geometry in the Riemannian case;
\item[{\rm{(2)}}]
never closed 
if $p \ge q>0$, 
 in particular,
 if $q=1$
(de Sitter geometry in the Lorentzian case
\cite{CM}).
\end{enumerate}
\end{fact}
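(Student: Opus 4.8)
The plan is to realize both space forms as Clifford--Klein forms of the single reductive homogeneous space $X = M_+^{p+1,q} \simeq O(p+1,q)/O(p,q)$, so that $G = O(p+1,q)$ and $H = O(p,q)$, and then to determine the possible size of a discontinuous group $\Gamma$ from the real ranks of $G$ and $H$. Once $\Gamma$ is pinned down, compactness of the quotient $X_\Gamma = \Gamma\backslash X$ is decided by comparing it with the compactness of $X$ itself.

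For part (1), where $q=0$, I would first observe that $X = M_+^{p+1,0} = S^p$ is the standard sphere, which is compact. A discrete group acting properly discontinuously and freely on a compact manifold is necessarily finite, and the quotient $\Gamma\backslash S^p$ is then again compact. Hence every (geodesically complete) Riemannian space form of positive curvature, being such a quotient, is closed.

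For part (2), where $p \ge q > 0$, the decisive computation is $\operatorname{rank}_{\mathbb{R}} O(a,b) = \min(a,b)$. Under the hypothesis $p \ge q > 0$ this yields
\[
  \operatorname{rank}_{\mathbb{R}} G = \operatorname{rank}_{\mathbb{R}} O(p+1,q) = q = \operatorname{rank}_{\mathbb{R}} O(p,q) = \operatorname{rank}_{\mathbb{R}} H,
\]
so $G$ and $H$ have equal real rank. The Calabi--Markus phenomenon (Fact~\ref{fact:4.1}(2)) then applies directly: for every infinite discrete subgroup $\Gamma$ of $G$, the quotient $\Gamma\backslash G/H$ fails to be Hausdorff, i.e.\ $\Gamma$ cannot act properly discontinuously. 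Consequently any discontinuous group for $X$ is finite. It remains to note that $X = M_+^{p+1,q}$ is itself noncompact whenever $q>0$: rewriting $Q_{p+1,q}(x)=1$ as $x_1^2+\cdots+x_{p+1}^2 = 1 + x_{p+2}^2+\cdots+x_{p+1+q}^2$ shows the last $q$ coordinates to be unbounded, so $X$ is an unbounded closed subset of ${\mathbb{R}}^{p+1+q}$. Since $X\to\Gamma\backslash X$ is a finite covering when $\Gamma$ is finite, and a finite cover of a compact space is compact, the noncompactness of $X$ forces $X_\Gamma$ to be noncompact as well. Specializing to $q=1$ recovers the de Sitter case.

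The heart of the argument is part (2), and within it the single load-bearing point is the rank equality $\operatorname{rank}_{\mathbb{R}} G = \operatorname{rank}_{\mathbb{R}} H$; once this is established the Calabi--Markus phenomenon does all the work and the residual noncompactness of $X$ is elementary. The main thing to be careful about is that it is precisely the hypothesis $p\ge q$ (and not merely $q>0$) that forces $\min(p+1,q)=\min(p,q)$: if instead $q>p$, the two ranks would differ, the rank obstruction would vanish, and the conclusion could fail. This is exactly why the statement is phrased for the regime $p\ge q>0$.
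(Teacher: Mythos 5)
Your argument is correct and is precisely the route the paper intends: the Fact is stated without proof, with the second part attributed to Calabi--Markus and explicitly referred back to Fact~\ref{fact:4.1}(2), and your reduction to $X=M_+^{p+1,q}\simeq O(p+1,q)/O(p,q)$ together with the rank computation $\operatorname{rank}_{\mathbb{R}}O(p+1,q)=\operatorname{rank}_{\mathbb{R}}O(p,q)=q$ for $p\ge q>0$ is exactly how that criterion gets applied, and the finite-cover argument for (1) and for the residual noncompactness in (2) is the standard completion. The only point you pass over silently is the same one the paper does: the identification of complete positive-curvature space forms of signature $(p,q)$ with Clifford--Klein forms of $M_+^{p+1,q}$ is asserted in the paper only for $p\ne 1$, so the boundary signature $(1,1)$ is strictly speaking left to the original reference \cite{CM}.
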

The phenomenon in the second statement is called the 
{\it{Calabi--Markus phenomenon}}
 (see Fact \ref{fact:4.1} (2)
 in the general setting).  
\begin{fact}
\label{fact:4.7}
Compact space forms of negative curvature exist
\begin{enumerate}
\item[{\rm{(1)}}]
for all dimensions
 if $q=0$, 
{\it{i.e.}}, 
hyperbolic geometry in the Riemannian case;  
\item[{\rm{(2)}}]
for odd dimensions 
 if $q=1$, 
{\it{i.e.}}, 
anti-de Sitter geometry in the Lorentzian case;
\item[{\rm{(3)}}]
for $(p,q)=(4m,3)$
 $(m \in {\mathbb{N}})$
 or $(8,7)$.  
\end{enumerate}
\end{fact}
See \cite[Section 4]{ky05} 
 for the survey
 of the space form problem
 in pseudo-Riemannian geometry
 and also of Question \ref{q:1}
 for more general $G/H$.

\vskip 0.8pc
A large and important class 
 of Clifford--Klein forms 
 $X_{\Gamma}$
 of a reductive homogeneous space $X=G/H$
 is constructed as follows 
 (see \cite{kob89}).  
\begin{definition}
{\rm{
A quotient $X_{\Gamma}= {\Gamma}\backslash X$
 of $X$
 by a discrete subgroup $\Gamma$ of $G$
 is called 
 {\it{standard}}
 if $\Gamma$ is contained 
in some reductive subgroup $L$
 of $G$ acting properly on $X$.  
}}
\end{definition}

If a subgroup $L$ acts properly on $G/H$, 
 then any discrete subgroup of $\Gamma$ acts properly discontinuously on 
 $G/H$.  
A handy criterion
 for the triple $(G,H,L)$
 of reductive groups
 such that $L$ acts properly on $G/H$
 is proved in \cite{kob89}, 
 as we shall recall below.  
Let $G=K \exp \overline{{\mathfrak {a}}_+} K$
 be a Cartan decomposition, 
where ${\mathfrak {a}}$ is a maximal abelian subspace
 of ${\mathfrak {p}}$
 and $\overline{{\mathfrak {a}}_+}$ is the dominant Weyl chamber
 with respect to a fixed positive system
 $\Sigma^+({\mathfrak{g}}, {\mathfrak{a}})$.  
This defines a map
 $\mu: G \to \overline{{\mathfrak {a}}_+}$
 ({\it{Cartan projection}})
by 
\[
   \mu(k_1 e^X k_2)
   =
   X
\quad
\text{for }
  k_1, k_2 \in K
\text{ and }
X\in {\mathfrak{a}}.  
\]
It is continuous, 
proper and surjective.  
If $H$ is a reductive subgroup, 
then there exists $g \in G$
 such that $\mu(g H g^{-1})$ is given by the intersection 
 of $\overline{{\mathfrak {a}}_+}$
 with a subspace
 of dimension $\operatorname{rank}_{\mathbb{R}} H$.  
By an abuse of notation,
 we use the same $H$ 
 instead of $g H g^{-1}$. 
With this convention, 
we have:
\begin{pcriterion}
[{\cite{kob89}}]
\label{pcr:4.9}
$L$ acts properly
 on $G/H$
 if and only if $\mu(L) \cap \mu(H)=\{0\}$.  
\end{pcriterion} 
By taking a lattice $\Gamma$ of such $L$, 
we found 
 a family of pseudo-Riemannian locally symmetric spaces
 $X_{\Gamma}$ in \cite{kob89, ky05}.  
The list of symmetric spaces 
 admitting standard Clifford--Klein forms of finite volume 
 (or compact forms)
 include $M_-^{p,q+1}=O(p,q+1)/O(p,q)$
 with $(p,q)$ 
 satisfying the conditions in Fact \ref{fact:4.7}.  
Further, 
 by applying Properness Criterion \ref{pcr:4.9}, 
 Okuda \cite{Ok} gave examples
 of pseudo-Riemannian locally symmetric spaces
 $\Gamma \backslash G/H$
 of infinite volume
 where $\Gamma$ is isomorphic to the fundamental group $\pi_1(\Sigma_g)$
 of a compact Riemann surface $\Sigma_g$
 with $g \ge 2$.

For the construction of stable spectrum 
 on $X_{\Gamma}$
 (see Theorem \ref{thm:6.4}
 and Theorem \ref{thm:7.1} (2)
 below),
 we introduced 
 in \cite[Section 1.6]{Adv16} the following concept:
\begin{definition}
{\rm{
A discrete subgroup $\Gamma$
 of $G$
 acts {\it{strongly properly discontinuously}}
 (or {\it{sharply}})
 on $X=G/H$
 if there exists $C$, $C'>0$
 such that for all $\gamma \in \Gamma$, 
\[
   d (\mu(\gamma), \mu(H)) \ge C\| \mu(\gamma) \| - C'.  
\]
Here $d(\cdot, \cdot)$ is a distance
 in ${\mathfrak {a}}$
 given by a Euclidean norm
 $\| \cdot \|$ 
 which is invariant 
 under the Weyl group of the restricted root system
 $\Sigma({\mathfrak{g}}, {\mathfrak{a}})$.  
We say the positive number $C$ is {\it{the first sharpness constant}} for $\Gamma$.  
}}
\end{definition}

If a reductive subgroup $L$ acts properly
 on a reductive homogeneous space $G/H$, 
then the action
 of a discrete subgroup $\Gamma$
 of $L$ is strongly properly discontinuous
 (\cite[Example 4.10]{Adv16}).

\subsection{Deformation of Clifford--Klein forms}
\label{subsec:4B}

Let $G$ be a Lie group and $\Gamma$ a finitely generated group.  
We denote by $\operatorname{Hom}(\Gamma,G)$ the set of 
 all homomorphisms of $\Gamma$ to $G$ topologized by pointwise convergence.  
By taking a finite set
 $\{\gamma_1, \cdots, \gamma_k\}$
 of generators of $\Gamma$, 
 we can identify $\operatorname{Hom}(\Gamma,G)$
 as a subset of the direct product 
 $G \times \cdots \times G$
 by the inclusion:
\begin{equation}
\label{eqn:HG}
   \operatorname{Hom}(\Gamma,G)
  \hookrightarrow
   G \times \cdots \times G, 
  \quad
  \varphi \mapsto (\varphi(\gamma_1), \cdots, \varphi(\gamma_k)). 
\end{equation}
If $\Gamma$ is finitely presentable, 
 then $\operatorname{Hom}(\Gamma,G)$
 is realized as a real analytic variety
 via \eqref{eqn:HG}.  

Suppose $G$ acts continuously 
 on a manifold $X$.  
We shall take $X=G/H$
 with noncompact closed subgroup $H$ later.  
Then not all discrete subgroups
 act properly discontinuously
 on $X$
 in this general setting.  
The main difference of the following definition
 of the author \cite {kob93} in the general case from 
that of Weil \cite{Weil}
 is a requirement 
 of proper discontinuity.  
\begin{align}
\label{eqn:R}
  R(\Gamma, G;X)
  :=  
  \{ & \varphi \in \operatorname{Hom}(\Gamma,G):
    \text{$\varphi$ is injective,} 
\\
    &\text{and }\varphi(\Gamma) \text{ acts properly discontinuously and freely on }G/H\}.  
\notag
\end{align}
Suppose now $X=G/H$ for a closed subgroup $H$.  
Then the double coset space $\varphi(\Gamma)\backslash G/H$
 forms a family of manifolds 
 that are locally modelled on $G/H$ 
 with parameter $\varphi \in R(\Gamma, G;X)$.  
To be more precise on \lq\lq{parameter}\rq\rq, 
 we note 
 that the conjugation
 by an element
 of $G$ induces an automorphism
 of $\operatorname{Hom}(\Gamma,G)$
 which leaves $R(\Gamma, G;X)$
 invariant.  
Taking these unessential deformations into account,
 we define the {\it{deformation space}} 
 ({\it{generalized Teichm{\"u}ller space}})
 as the quotient set
\[
  {\mathcal {T}}(\Gamma, G;X)
  :=
  R(\Gamma, G;X)/G.  
\]
\begin{example}
\begin{enumerate}
\item[{\rm{(1)}}]
Let $\Gamma$ be the surface group 
 $\pi_1(\Sigma_g)$
 of genus $g \ge 2$, 
$G=PSL(2,{\mathbb{R}})$, 
 $X=H^2$
 (two-dimensional hyperbolic space).  
Then ${\mathcal{T}}(\Gamma, G;X)$
 is the classical Teichm{\"u}ller space, 
 which is of dimension $6g-6$.  
\item[{\rm{(2)}}]
$G={\mathbb{R}}^n$, 
 $X={\mathbb{R}}^n$, 
 $\Gamma={\mathbb{Z}}^n$.  
Then ${\mathcal{T}}(\Gamma, G;X) \simeq GL(n,{\mathbb{R}})$
 (see \eqref{eqn:TGL} below).
\item[{\rm{(3)}}]
$G=SO(2,2)$,
 $X=\operatorname{Ad S}^3$, 
and $\Gamma=\pi_1(\Sigma_g)$.  
Then ${\mathcal{T}}(\Gamma, G;X)$ is of dimension $12g-12$
 (see \cite[Section 9.2]{Adv16} and references therein).  
\end{enumerate}
\end{example}
\begin{remark}
There is a natural isometry 
 between $X_{\varphi(\Gamma)}$ and $X_{\varphi(g \Gamma g^{-1})}$.  
Hence,
the set  $\operatorname{Spec}_d(X_{\varphi(\Gamma)})$
 of $L^2$-eigenvalues 
 is independent of the conjugation
 of $\varphi \in R(\Gamma, G;X)$ by an element  of $G$.
By an abuse of notation
 we shall write $\operatorname{Spec}_d(X_{\varphi(\Gamma)})$
 for $\varphi \in {\mathcal{T}}(\Gamma, G;X)$
when we deal with Problem \ref{prob:B}
 of Section \ref{sec:2}.
\end{remark}

\section{Spectrum on ${\mathbb{R}}^{p,q}/{\mathbb{Z}}^{p+q}$
 and Oppenheim conjecture}
\label{sec:5}

This section gives an elementary but inspiring observation
 of spectrum on flat pseudo-Riemannian manifolds.  
 
\subsection{Spectrum of ${\mathbb{R}}^{p,q}/\varphi({\mathbb{Z}}^{p+q})$}
\label{subsec:5A}

Let $G={\mathbb{R}}^n$ and $\Gamma={\mathbb{Z}}^n$.  
Then the group homomorphism
 $\varphi:\Gamma \to G$ is uniquely determined
 by the image $\varphi(\vec{e}_j)$
 ($1 \le j \le n$)
 where $\vec e_1$, $\cdots$, $\vec e_n$ $ \in {\mathbb{Z}}^n$
 are the standard basis, 
 and thus we have a bijection
\begin{equation}
\label{eqn:HMnR}
\operatorname{Hom}(\Gamma, G) 
\overset \sim \leftarrow
M(n,{\mathbb{R}}),
\quad
\varphi_g 
\reflectbox{$\mapsto$}
g
\end{equation}
by 
$
  \varphi_g(\vec m):=g \vec m
$
for 
$
\vec m \in {\mathbb{Z}}^n,
$
or equivalently, 
by 
$
  g=(\varphi_g(\vec{e}_1), \cdots, \varphi_g(\vec{e}_n)).  
$

Let $\sigma \in {\operatorname{Aut}}(G)$ be defined
 by $\sigma(\vec x):=-\vec x$.
Then $H:=G^{\sigma}=\{0\}$
 and $X:=G/H \simeq {\mathbb{R}}^n$ is a symmetric space.  
The discrete group $\Gamma$ acts properly discontinuously
 on $X$
 via $\varphi_g$
 if and only if 
 $g \in GL(n,{\mathbb{R}})$.  
Moreover,
since $G$ is abelian, 
 $G$ acts trivially
 on $\operatorname{Hom}(\Gamma, G)$
 by conjugation, 
 and therefore the deformation space
 ${\mathcal{T}}(\Gamma, G;X)$
 identifies with 
$R(\Gamma, G;X)$.  
Hence we have a natural bijection 
between the two subsets
 of \eqref{eqn:HMnR}:
\begin{equation}
\label{eqn:TGL}
  {\mathcal{T}}(\Gamma, G;X) \overset \sim \leftarrow GL(n,{\mathbb{R}}).  
\end{equation}

Fix $p$, $q$ $\in {\mathbb{N}}$
 such that $p+q=n$, 
and we endow $X \simeq {\mathbb{R}}^n$
 with the standard flat indefinite metric 
 ${\mathbb{R}}^{p,q}$
 (see Example \ref{ex:Rpq}).  
Let us determine 
 $\operatorname{Spec}_d(X_{\varphi_g(\Gamma)})
  \simeq
  \operatorname{Spec}_d({\mathbb{R}}^{p,q}/\varphi_g({\mathbb{Z}}^n))$
 for $g \in GL(n,{\mathbb{R}}) \simeq {\mathcal{T}}(\Gamma, G;X)$.

For this, 
 we define a function
 on $X={\mathbb{R}}^n$ by
\[
   f_{\vec m}(\vec x)
:=
  \exp
  (2 \pi \sqrt{-1}\, {}^{t\!} \vec m g^{-1} \vec x)
\qquad
(\vec x \in {\mathbb{R}}^n)
\] 
for each $\vec m \in {\mathbb{Z}}^n$
 where $\vec x$ and $\vec m$ are regarded
 as column vectors.  
Clearly, 
 $f_{\vec m}$
 is $\varphi_g(\Gamma)$-periodic
 and defines a real analytic function
 on $X_{\varphi_g(\Gamma)}$.  
Furthermore,
 $f_{\vec m}$ is an eigenfunction
 of the Laplacian $\Delta_{{\mathbb{R}}^{p,q}}$:
\[
   \Delta_{{\mathbb{R}}^{p,q}} f_{\vec m}
  = - 4 \pi^2 Q_{g^{-1} I_{p,q} {}^{t\!} g^{-1}}(\vec m) f_{\vec m}, 
\]
where, for a symmetric matrix $S \in M(n,{\mathbb{R}})$,
 $Q_S$ denotes the quadratic form 
 on ${\mathbb{R}}^n$
 given by 
\[
 Q_S(\vec y)
:=
{}^{t\!}\vec y S \vec y
\qquad
  \text{for } \vec y \in {\mathbb{R}}^n.  
\]
Since $\{f_{\vec m}: \vec m \in {\mathbb{Z}}^n \}$
 spans a dense subspace
 of $L^2(X_{\varphi_g(\Gamma)})$, 
we have shown:

\begin{proposition}
\label{prop:SpecRn}
For any $g \in GL(n,{\mathbb{R}}) \simeq {\mathcal{T}}(\Gamma, G;X)$, 
\[
\operatorname{Spec}_d(X_{\varphi_g(\Gamma)})
=
\{-4 \pi^2 Q_{g^{-1} I_{p,q} {}^{t\!}g^{-1}}
 (\vec m): \vec m \in {\mathbb{Z}}^n\}.  
\]
\end{proposition}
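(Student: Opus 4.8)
The plan is to reduce everything to classical Fourier analysis on the standard torus $\mathbb{R}^n/\mathbb{Z}^n$ by means of the linear change of variables $\vec x \mapsto \vec y = g^{-1}\vec x$, which identifies $X_{\varphi_g(\Gamma)} = \mathbb{R}^{p,q}/g\mathbb{Z}^n$ diffeomorphically with $\mathbb{R}^n/\mathbb{Z}^n$ and carries $f_{\vec m}$ to the character $\vec y \mapsto \exp(2\pi\sqrt{-1}\,{}^{t\!}\vec m\vec y)$. Concretely I would establish three facts and then assemble them: (i) each $f_{\vec m}$ descends to a well-defined function on the quotient; (ii) each $f_{\vec m}$ is an eigenfunction of $\Delta_{\mathbb{R}^{p,q}}$ with the asserted eigenvalue; and (iii) the family $\{f_{\vec m}\}_{\vec m \in \mathbb{Z}^n}$ is complete in $L^2(X_{\varphi_g(\Gamma)})$. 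Steps (i) and (ii) amount to the computations already displayed in the text; the real content lies in step (iii) together with the argument that completeness forces the listed numbers to exhaust the whole discrete spectrum.

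For periodicity, the deck transformation attached to $\vec k \in \mathbb{Z}^n$ is $\vec x \mapsto \vec x + g\vec k$, and $f_{\vec m}(\vec x + g\vec k) = f_{\vec m}(\vec x)\exp(2\pi\sqrt{-1}\,{}^{t\!}\vec m\vec k) = f_{\vec m}(\vec x)$ since ${}^{t\!}\vec m\vec k \in \mathbb{Z}$; hence $f_{\vec m}$ descends to a real-analytic function on $X_{\varphi_g(\Gamma)}$. For the eigenvalue I would set $\vec\xi := {}^{t\!}g^{-1}\vec m$, so that the exponent reads $2\pi\sqrt{-1}\,{}^{t\!}\vec\xi\vec x$; differentiating the constant-coefficient operator $\Delta_{\mathbb{R}^{p,q}} = \sum_{j=1}^p\partial_{x_j}^2 - \sum_{j=p+1}^n\partial_{x_j}^2$ pulls out the scalar $-4\pi^2 Q_{p,q}(\vec\xi)$, and rewriting $Q_{p,q}(\vec\xi) = {}^{t\!}\vec\xi I_{p,q}\vec\xi = {}^{t\!}\vec m\,g^{-1}I_{p,q}{}^{t\!}g^{-1}\vec m = Q_{g^{-1}I_{p,q}{}^{t\!}g^{-1}}(\vec m)$ yields exactly the stated eigenvalue.

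The crux is step (iii) and the passage to the full spectrum. After the change of variables the $f_{\vec m}$ become the standard exponential characters on $\mathbb{R}^n/\mathbb{Z}^n$, which form a complete orthogonal system in $L^2$ by the classical theory of Fourier series; pulling back by the (measure-rescaling) diffeomorphism shows that $\{f_{\vec m}\}$ spans a dense subspace of $L^2(X_{\varphi_g(\Gamma)})$. It then remains to argue that the numbers $\lambda_{\vec m} := -4\pi^2 Q_{g^{-1}I_{p,q}{}^{t\!}g^{-1}}(\vec m)$ are \emph{all} of $\operatorname{Spec}_d$: each is realized by the $L^2$-eigenfunction $f_{\vec m}$, so the stated set is contained in $\operatorname{Spec}_d$; conversely, expanding any $L^2$-eigenfunction in the complete orthogonal basis $\{f_{\vec m}\}$ and using $\Delta_{\mathbb{R}^{p,q}} f_{\vec m} = \lambda_{\vec m} f_{\vec m}$ shows that only the modes with $\lambda_{\vec m}$ equal to the given eigenvalue survive, whence every $L^2$-eigenvalue coincides with some $\lambda_{\vec m}$. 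The main obstacle — and the point where the non-Riemannian nature of the problem must be handled with care — is precisely this exhaustion step: because $\Delta_{\mathbb{R}^{p,q}}$ is not elliptic when $q \geq 1$, one cannot appeal to the compact-Riemannian spectral theory of Fact \ref{fact:RLop}, and the argument must rest instead on the fact that on the flat torus a constant-coefficient operator is simultaneously diagonalized by the Fourier characters regardless of ellipticity. I would also record that this same diagonalization makes $\Delta_{\mathbb{R}^{p,q}}$ essentially self-adjoint on $L^2(X_{\varphi_g(\Gamma)})$, so that $\operatorname{Spec}_d(X_{\varphi_g(\Gamma)})$ is unambiguously defined.
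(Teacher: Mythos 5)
Your proof is correct and follows essentially the same route as the paper: the text preceding the proposition verifies the $\varphi_g(\Gamma)$-periodicity of $f_{\vec m}$, computes $\Delta_{{\mathbb{R}}^{p,q}} f_{\vec m} = -4\pi^2 Q_{g^{-1}I_{p,q}{}^{t\!}g^{-1}}(\vec m) f_{\vec m}$, and concludes from the density of $\operatorname{span}\{f_{\vec m}\}$ in $L^2(X_{\varphi_g(\Gamma)})$. Your added care on the exhaustion step (expanding an arbitrary $L^2$-eigenfunction in the orthogonal basis $\{f_{\vec m}\}$ to see that no other eigenvalues can occur) is a welcome elaboration of what the paper leaves implicit, but it is not a different argument.
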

Here are some observation 
 in the $n=1,2$ cases.  
\begin{example}
\label{ex:short}
Let $n=1$
 and $(p,q)=(1,0)$.  
Then $\operatorname{Spec}_d(X_{\varphi_g(\Gamma)})
=\{- 4 \pi^2 m^2 /g^2:m \in {\mathbb{Z}}\}$
 for $g \in {\mathbb{R}}^{\times}\simeq GL(1,{\mathbb{R}})$
by Proposition \ref{prop:SpecRn}.  
Thus the smaller the period $|g|$ is, 
 the larger the absolute value
 of the eigenvalue
 $|-4 \pi^2 m^2/g^2|$ becomes
 for each fixed $m \in {\mathbb{Z}} \setminus \{0\}$.  
This is thought of as a mathematical model
 of a music instrument
 for which shorter strings produce a higher pitch
 than longer strings
 (see Introduction).  
\end{example}

\begin{example}
\label{ex:n2}
Let $n=2$ and $(p,q)=(1,1)$.  
Take $g=I_2$, 
 so that $\varphi_g(\Gamma)={\mathbb{Z}}^2$
 is the standard lattice.  
Then the $L^2$-eigenspace
 of the Laplacian $\Delta_{{\mathbb{R}}^{1,1}/{\mathbb{Z}}^2}$
 for zero eigenvalue contains 
$
  W:=\{ \psi(x-y):\psi \in L^2({\mathbb{R}}/{\mathbb{Z}})\}.  
$
Since $W$ is infinite-dimensional and 
 $W \not \subset C^{\infty}({\mathbb{R}}^2/{\mathbb{Z}}^2)$, 
 the third and fourth statements of Fact \ref{fact:RLop}
 fail in this pseudo-Riemannian setting.  
\end{example}

By the explicit description of $\operatorname{Spec}_d(X_{\varphi(\Gamma)})$
 for all $\varphi \in {\mathcal{T}}(\Gamma, G;X)$
 in Proposition \ref{prop:SpecRn}, 
 we can also tell the behaviour
 of $\operatorname{Spec}_d(X_{\varphi(\Gamma)})$
under deformation of $\Gamma$
 by $\varphi$.  
Obviously,
 any constant function on $X_{\varphi(\Gamma)}$
 is an eigenfunction
of the Laplacian 
 $\Delta_{X_{\varphi(\Gamma)}}=\Delta_{{\mathbb{R}}^{p,q}}/\varphi({\mathbb{Z}}^{p+q})$
 with eigenvalue zero.  
We see that 
 this is the unique stable $L^2$-eigenvalue
 in the flat compact manifold:
\begin{corollary}
[non-existence of stable eigenvalues]
\label{cor:varyRn}
Let $n=p+q$
 with $p,q \in {\mathbb{N}}$.   
For any open subset $V$
 of ${\mathcal{T}}(\Gamma, G;X)$, 
\[
  \bigcap_{\varphi \in V} \operatorname{Spec}_d(X_{\varphi(\Gamma)})
 = \{0\}.  
\]
\end{corollary}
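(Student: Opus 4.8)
The plan is to prove the two inclusions of Corollary \ref{cor:varyRn} separately. First I would dispose of the easy inclusion $\{0\}\subseteq\bigcap_{\varphi\in V}\operatorname{Spec}_d(X_{\varphi(\Gamma)})$: for every $g\in GL(n,\mathbb{R})$ the index $\vec m=0$ contributes $-4\pi^2 Q_{g^{-1}I_{p,q}{}^t g^{-1}}(0)=0$ to the eigenvalue set of Proposition \ref{prop:SpecRn} (this is simply the constant eigenfunction), so $0$ lies in every $\operatorname{Spec}_d(X_{\varphi_g(\Gamma)})$ and hence in the intersection. The real content is the reverse inclusion, namely that no nonzero real number survives.

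To attack it I would first recast the spectrum. Since ${}^t\vec m\,g^{-1}I_{p,q}\,{}^t g^{-1}\vec m=Q_{p,q}({}^t g^{-1}\vec m)$, the substitution $h:={}^t g^{-1}$, a self-diffeomorphism of $GL(n,\mathbb{R})$, rewrites Proposition \ref{prop:SpecRn} as
\[
  \operatorname{Spec}_d(X_{\varphi_g(\Gamma)})=\{-4\pi^2 Q_{p,q}(h\vec m):\vec m\in\mathbb{Z}^n\},
\]
that is, $-4\pi^2$ times the set of values of the form $Q_{p,q}$ on the lattice $h\mathbb{Z}^n$. The open set $V\subseteq\mathcal{T}(\Gamma,G;X)\simeq GL(n,\mathbb{R})$ corresponds to an open set $U\subseteq GL(n,\mathbb{R})$, and the goal becomes: for each $c\in\mathbb{R}\setminus\{0\}$ there is some $h\in U$ with $c\notin\{Q_{p,q}(h\vec m):\vec m\in\mathbb{Z}^n\}$.

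The key step is a genericity argument. Fix $c\neq 0$. For each $\vec m\in\mathbb{Z}^n\setminus\{0\}$ the map $h\mapsto Q_{p,q}(h\vec m)$ is a homogeneous quadratic polynomial in the entries of $h$ that is not identically zero (it takes the value $1$ at any $h$ with $h\vec m=\vec e_1$); being homogeneous of positive degree it is nonconstant, so $h\mapsto Q_{p,q}(h\vec m)-c$ is a nonzero polynomial and
\[
  Z_{\vec m}:=\{h\in U:Q_{p,q}(h\vec m)=c\}
\]
is the trace on $U$ of a proper real-algebraic hypersurface, closed in $U$ with empty interior. The index $\vec m=0$ never meets the level $c$, since $Q_{p,q}(0)=0\neq c$. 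As $\mathbb{Z}^n\setminus\{0\}$ is countable, $\bigcup_{\vec m}Z_{\vec m}$ has Lebesgue measure zero (and is meager); since $U$ is a nonempty open subset of $GL(n,\mathbb{R})$, hence of positive measure, it cannot be covered, and any $h_0\in U\setminus\bigcup_{\vec m}Z_{\vec m}$ satisfies $Q_{p,q}(h_0\vec m)\neq c$ for every $\vec m\in\mathbb{Z}^n$. Transporting $h_0$ back to the corresponding $g_0$ produces $\varphi_{g_0}\in V$ with $-4\pi^2 c\notin\operatorname{Spec}_d(X_{\varphi_{g_0}(\Gamma)})$; hence $-4\pi^2 c$ is not in the intersection, and since $c$ was an arbitrary nonzero real this completes the proof.

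The only delicate point is the verification that each $Z_{\vec m}$ is genuinely nowhere dense, i.e. that $h\mapsto Q_{p,q}(h\vec m)-c$ is not the zero polynomial for $\vec m\neq 0$; this uses both $\vec m\neq 0$ and the nondegeneracy of $Q_{p,q}$ (invertibility of $I_{p,q}$), after which everything rests on the standard fact that a countable union of nowhere-dense sets cannot exhaust an open subset of a Baire space. I would stress that this variation statement is soft: unlike the density phenomena attached to Oppenheim's conjecture (Section \ref{subsec:5B}), ruling out stable nonzero eigenvalues needs no equidistribution input, only a measure/category argument.
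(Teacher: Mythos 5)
Your proof is correct and follows the same route the paper intends: the corollary is derived directly from the explicit spectrum formula of Proposition \ref{prop:SpecRn}, and your measure/Baire genericity argument over the countably many lattice points $\vec m\neq 0$ rigorously supplies the "eigenvalues vary" step that the paper leaves implicit. No gaps.
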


\subsection{Oppenheim's conjecture and stability of spectrum}
\label{subsec:5B}

In 1929, 
Oppenheim \cite{O29}
 raised a question 
 about the distribution
 of an indefinite quadratic forms
 at integral points.  
The following theorem, 
 referred to as Oppenheim's conjecture, 
 was proved by Margulis
(see \cite{M00}
 and references therein).  
\begin{fact}
[Oppenheim's conjecture]
Suppose $n \ge 3$
 and $Q$ is a real nondegenerate indefinite quadratic form
 in $n$ variables.  
Then either $Q$ is proportional to a form
 with integer coefficients
 (and thus $Q({\mathbb{Z}}^n)$
 is discrete in ${\mathbb{R}}$), 
 or $Q({\mathbb{Z}}^n)$ is dense in ${\mathbb{R}}$.  
\end{fact}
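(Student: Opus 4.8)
The plan is to follow Margulis's dynamical proof, recasting the arithmetic question about the values of $Q$ as a question about orbit closures of a group generated by unipotents acting on a space of lattices. Throughout I assume $Q$ is not proportional to a rational form and aim to show $Q(\mathbb{Z}^n)$ is dense, the discreteness in the rational case being immediate. First I would reduce to the ternary case $n=3$: given an irrational indefinite $Q$ in $n\ge 3$ variables, one restricts to a suitable rational $3$-dimensional subspace $V\subset\mathbb{Q}^n$ on which $Q$ stays nondegenerate, indefinite, and irrational; since $Q(V\cap\mathbb{Z}^n)\subseteq Q(\mathbb{Z}^n)$, density for the restriction forces density for $Q$. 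This reduction is classical and is not the main difficulty; the ternary case is the heart of the matter.

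Next I would set up the dynamical dictionary. Fix a standard form $Q_0$ of signature $(2,1)$ and write $Q=c\,Q_0\circ g$ with $c>0$ and $g\in SL(3,\mathbb{R})$, so that, up to the positive scalar $c$, the value set $Q(\mathbb{Z}^3)$ coincides with $Q_0(\Lambda)$ for the unimodular lattice $\Lambda:=g\mathbb{Z}^3$. Let $H:=SO(Q_0)\cong SO(2,1)$, acting on the space $\Omega:=SL(3,\mathbb{R})/SL(3,\mathbb{Z})$ of unimodular lattices. Since $H$ preserves $Q_0$, the value set $Q_0(\Lambda')$ is literally constant as $\Lambda'$ ranges over the orbit $H\Lambda$, and a one-line limiting argument gives $Q_0(\Lambda')\subseteq\overline{Q_0(\Lambda)}$ for every $\Lambda'$ in the orbit closure $\overline{H\Lambda}$. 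Hence if $\overline{H\Lambda}=\Omega$, then $\overline{Q_0(\Lambda)}$ contains $Q_0(\Lambda')$ for all lattices $\Lambda'$; realizing any prescribed $t\in\mathbb{R}$ as $Q_0$ of a vector in some unimodular lattice $\Lambda'$ and approximating $\Lambda'$ by lattices in the dense orbit, one concludes that $Q_0(\Lambda)$, and therefore $Q(\mathbb{Z}^3)$, is dense in $\mathbb{R}$.

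It remains to prove the dichotomy that the orbit $H\Lambda$ is either closed or dense in $\Omega$, and this is the step I expect to be the main obstacle. The group $H$ is generated by unipotent one-parameter subgroups and acts irreducibly on $\mathbb{R}^3$, so $SO(2,1)$ is a maximal connected subgroup of $SL(3,\mathbb{R})$; the homogeneity of unipotent orbit closures (Raghunathan's conjecture, proved in general by Ratner and in this case directly by Margulis) then forces $\overline{H\Lambda}=L\Lambda$ with $L=H$ or $L=SL(3,\mathbb{R})$. If $L=H$ the orbit is closed, so $\mathrm{Stab}_H(\Lambda)=H\cap g\,SL(3,\mathbb{Z})\,g^{-1}$ is a lattice in $H$, and the Borel--Harish-Chandra arithmeticity criterion then forces $Q$ to be proportional to a rational form, against our hypothesis; hence $L=SL(3,\mathbb{R})$ and the orbit is dense. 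The genuine difficulty is the orbit-closure theorem itself: Margulis's original argument bypasses the full Ratner machinery, instead exploiting the polynomial divergence of unipotent trajectories together with the Dani--Margulis non-divergence estimates, studying a minimal $H$-invariant set and showing that a non-closed orbit must accumulate so as to enlarge its closure to all of $\Omega$.
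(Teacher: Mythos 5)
The paper does not prove this statement: it is imported verbatim as a ``Fact'' credited to Margulis, with a pointer to \cite{M00} and the references therein, so there is no internal argument to compare yours against. Judged on its own terms, your outline is an accurate account of the standard (Margulis) proof: the reduction to the ternary case, the translation of the value set $Q(\mathbb{Z}^3)$ into the $SO(2,1)$-orbit of the lattice $g\mathbb{Z}^3$ in $SL(3,\mathbb{R})/SL(3,\mathbb{Z})$, the closed-or-dense dichotomy coming from maximality of $SO(2,1)$ together with homogeneity of unipotent orbit closures, and the identification of the closed-orbit case with rationality of $Q$. You are also right to flag that the entire difficulty is concentrated in the orbit-closure theorem, which your proposal cites rather than proves; given that this is precisely the content of Margulis's work, deferring it is consistent with how the paper itself treats the statement. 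Three small points deserve care if you were to write this out: the reduction to $n=3$ must produce a rational $3$-plane on which the restricted form is still \emph{irrational}, which requires a short argument (one cannot simply take any rational subspace); the step ``closed orbit $\Rightarrow$ $\mathrm{Stab}_H(\Lambda)$ is a lattice in $H$'' uses the nontrivial fact that closed orbits of semisimple subgroups in $SL(3,\mathbb{R})/SL(3,\mathbb{Z})$ carry finite invariant volume (this is packaged into Ratner's theorem but is a separate input in Margulis's original approach); and the tool that upgrades ``$H\cap g\,SL(3,\mathbb{Z})\,g^{-1}$ is a lattice (hence Zariski dense) in $SO(Q)$'' to ``$Q$ is proportional to a rational form'' is the Borel density theorem rather than the Borel--Harish-Chandra arithmeticity criterion. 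None of these affects the correctness of the overall structure.
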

Combining this with Proposition \ref{prop:SpecRn}, 
 we get the following.  

\begin{theorem}
\label{thm:Oppen}
Let $p+q =n$, 
 $p \ge 2$, $q \ge 1$, 
$G={\mathbb{R}}^n$, 
 $X = {\mathbb{R}}^{p,q}$
 and $\Gamma = {\mathbb{Z}}^n$.  
We define an open dense subset $U$
 of ${\mathcal{T}}(\Gamma, G;X) \simeq GL(n,{\mathbb{R}})$
 by
\[
  U:=\{g \in GL(n,{\mathbb{R}})
       :
       g^{-1} I_{p,q} {}^{t\!}g^{-1}\text{ is not proportional 
       to an element of $M(n,{\mathbb{Z}})$}.
\}
\]
Then the set $\operatorname{Spec}_d(X_{\varphi(\Gamma)})$
 of $L^2$-eigenvalues
 of the Laplacian 
 is dense in ${\mathbb{R}}$
 if and only if $\varphi \in U$.  
\end{theorem}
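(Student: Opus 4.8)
The plan is to combine Proposition~\ref{prop:SpecRn} with the Oppenheim conjecture (as proved by Margulis) essentially as a formal translation. By Proposition~\ref{prop:SpecRn}, for $g \in GL(n,\mathbb{R}) \simeq \mathcal{T}(\Gamma, G; X)$ we have
\[
\operatorname{Spec}_d(X_{\varphi_g(\Gamma)})
=
\{-4\pi^2 Q_S(\vec m): \vec m \in \mathbb{Z}^n\},
\qquad
S := g^{-1} I_{p,q}\, {}^{t\!}g^{-1}.
\]
Thus $\operatorname{Spec}_d(X_{\varphi_g(\Gamma)})$ is dense in $\mathbb{R}$ if and only if the value set $Q_S(\mathbb{Z}^n)$ is dense in $\mathbb{R}$, since multiplication by the fixed nonzero constant $-4\pi^2$ is a homeomorphism of $\mathbb{R}$. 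So the entire statement reduces to characterizing exactly those $g$ for which $Q_S(\mathbb{Z}^n)$ is dense.

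The first step is to verify that $Q_S$ is a nondegenerate indefinite quadratic form in $n \ge 3$ variables, so that the dichotomy of the Oppenheim conjecture applies. Nondegeneracy is clear because $S = g^{-1} I_{p,q}\, {}^{t\!}g^{-1}$ is a product of invertible matrices, hence invertible; indefiniteness and the signature $(p,q)$ follow because $S$ is congruent to $I_{p,q}$ via $g^{-1}$, and by Sylvester's law of inertia congruence preserves signature. The hypotheses $p \ge 2$, $q \ge 1$ guarantee $n = p+q \ge 3$ and that the form is genuinely indefinite (both $p,q > 0$), which is exactly what the Oppenheim conjecture requires. The Oppenheim dichotomy then says: either $Q_S$ is proportional to an integral form, in which case $Q_S(\mathbb{Z}^n)$ is discrete, or $Q_S(\mathbb{Z}^n)$ is dense in $\mathbb{R}$.

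The second step is to match this dichotomy with the definition of $U$. By definition, $g \in U$ precisely when $S = g^{-1} I_{p,q}\, {}^{t\!}g^{-1}$ is \emph{not} proportional to a matrix in $M(n,\mathbb{Z})$. I would check that $Q_S$ is proportional to a form with integer coefficients if and only if the symmetric matrix $S$ is proportional to an element of $M(n,\mathbb{Z})$; this is immediate from the correspondence $S \leftrightarrow Q_S$ between symmetric matrices and quadratic forms, noting that integrality of all coefficients of $Q_S$ is equivalent to $S$ having entries in $\tfrac12\mathbb{Z}$, which differs from $S \in M(n,\mathbb{Z})$ only by an overall factor of $2$ and hence does not affect the notion of proportionality to an integral matrix. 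Therefore $g \notin U$ corresponds exactly to the first (discrete) alternative of the Oppenheim dichotomy, and $g \in U$ to the second (dense) alternative. Combining, $\operatorname{Spec}_d(X_{\varphi_g(\Gamma)})$ is dense in $\mathbb{R}$ if and only if $g \in U$, which is the claim. That $U$ is open and dense in $GL(n,\mathbb{R})$ follows from the fact that the proportionality condition cuts out a countable union of lower-dimensional real-algebraic subvarieties (one for each rational ray of integral matrices), whose complement is open and dense.

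The only genuine subtlety, and the step I would scrutinize most carefully, is the precise bookkeeping in the equivalence ``$Q_S$ proportional to an integral form $\iff$ $S$ proportional to an element of $M(n,\mathbb{Z})$'', since the Oppenheim statement is phrased in terms of the \emph{form} while $U$ is phrased in terms of the \emph{matrix}, and a quadratic form determines its symmetric matrix only up to the factor of $2$ on off-diagonal entries. This is entirely elementary once unwound, so there is no hard analytic obstacle here; all the real mathematical content is imported from Margulis's theorem, and the remainder is the translation I have outlined.
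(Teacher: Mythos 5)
Your argument is correct and is precisely the paper's proof: the paper's justification is the single sentence ``combining this [Oppenheim's conjecture] with Proposition~\ref{prop:SpecRn}'', and your write-up merely supplies the signature check (Sylvester) and the matrix-versus-form bookkeeping that this combination requires. The one caveat is your closing claim that $U$ is open: the complement of $U$ contains every $g$ for which $g^{-1}I_{p,q}\,{}^{t\!}g^{-1}$ is rational and is therefore dense in $GL(n,{\mathbb{R}})$, so $U$ is dense (by Baire, as you say) but not open --- a slip already present in the paper's own statement of the theorem, and one that does not affect the equivalence you prove.
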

Thus the fifth statement of Fact \ref{fact:RLop}
 for compact Riemannian manifolds
 do fail in the pseudo-Riemannian case.

\section{Main results---sound of anti-de Sitter manifolds}
\label{sec:6}

\subsection{Intrinsic sound of anti-de Sitter manifolds}
\label{subsec:6A}
In general,
 it is not clear 
 whether the Laplacian $\Delta_M$
 admits infinitely many $L^2$-eigenvalues
 for compact pseudo-Riemannian manifolds.  
For anti-de Sitter 3-manifolds,
 we proved in \cite[Theorem 1.1]{Adv16}:
\begin{theorem}
\label{thm:6.1}
For any compact anti-de Sitter 3-manifold $M$, 
 there exist infinitely many $L^2$-eigenvalues
 of the Laplacian $\Delta_M$.  
\end{theorem}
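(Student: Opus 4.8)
The plan is to pass to the group-manifold model of the anti-de Sitter space, produce a sequence of $L^2$-eigenfunctions on $X$ coming from discrete series, and then transport them to $M$ by a Poincar\'e series whose convergence is forced by sharpness. First I would use the low-dimensional isomorphism $\operatorname{AdS}^3=M_-^{2,2}\simeq O(2,2)/O(1,2)$ together with the local isomorphisms $O(2,2)\sim SL(2,{\mathbb{R}})\times SL(2,{\mathbb{R}})$ and $O(1,2)\sim \Delta SL(2,{\mathbb{R}})$ to identify $X=\operatorname{AdS}^3$ with the group manifold $G':=SL(2,{\mathbb{R}})$, on which $G'\times G'$ acts by $(g_1,g_2)\cdot x=g_1 x g_2^{-1}$ and the bi-invariant (Killing) form is the Lorentzian metric of Proposition \ref{prop:GHg}. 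Under this identification the ring of $G$-invariant differential operators is generated by one element, and $\Delta_X$ is a normalization of the Casimir operator of $G'$. I would then invoke the structure theory of compact $\operatorname{AdS}^3$-manifolds, namely that, after passing to a finite cover and possibly switching the two factors, $M=X_\Gamma$ with $\Gamma$ the graph of a pair of representations one of which is Fuchsian and cocompact, and that such $\Gamma$ acts \emph{sharply} on $X$ in the sense of the definition of strong proper discontinuity above (this is the geometric input provided by the theory of Section \ref{sec:4}, ultimately resting on Properness Criterion \ref{pcr:4.9}).

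Next I would manufacture eigenfunctions on $X$ itself. Since $\Delta_X$ is the Casimir of $G'$, the holomorphic discrete series representations $\pi_k$ of $G'$ furnish square-integrable joint eigenfunctions on $X=G'$ through their matrix coefficients: for a lowest-weight vector $v_k$, the function $f_k(x):=\langle \pi_k(x)v_k,v_k\rangle$ is an $L^2(X)$-eigenfunction of $\Delta_X$ with eigenvalue $\lambda_k$, where $|\lambda_k|\to\infty$ as $k\to\infty$. The feature I would exploit is that $f_k$ decays \emph{exponentially} in the Cartan projection, at a rate $c_k$ governed by the Harish-Chandra parameter of $\pi_k$, so that $c_k\to\infty$ as the discrete series gets deeper.

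The core of the argument is then the Poincar\'e series
\[
   F_k(x):=\sum_{\gamma\in\Gamma} f_k(\gamma^{-1}\cdot x).
\]
Here I would use sharpness directly: the value $|f_k(\gamma^{-1}\cdot x)|$ is bounded, up to lower-order factors, by $e^{-c_k\, d(\mu(\gamma),\mu(H))}$, because the size of $\gamma^{-1}\cdot x$ measured in $X$ is comparable to $d(\mu(\gamma),\mu(H))$, and this is precisely the quantity the sharpness inequality $d(\mu(\gamma),\mu(H))\ge C\|\mu(\gamma)\|-C'$ controls. Thus $|f_k(\gamma^{-1}\cdot x)|\lesssim e^{-c_kC'}\,e^{-c_kC\|\mu(\gamma)\|}$, and comparing with the growth rate (critical exponent) $\delta_\Gamma$ of $\#\{\gamma:\|\mu(\gamma)\|\le R\}$ shows that the series converges absolutely and locally uniformly as soon as $c_kC>\delta_\Gamma$; since $c_k\to\infty$, this holds for all large $k$. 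Each $F_k$ is then smooth and $\Gamma$-invariant, hence descends to $M=X_\Gamma$, and since $M$ is compact it lies in $L^2(M)$; moreover $\Delta_X$ commutes with the $G$-action and hence with the summation, so $F_k$ is an eigenfunction of $\Delta_M$ with eigenvalue $\lambda_k$.

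Finally I would confront nonvanishing, which I expect to be the main obstacle: a Poincar\'e series can collapse to zero, so obtaining infinitely many eigenvalues requires $F_k\neq 0$ for infinitely many $k$. The natural route is to show that, after increasing $k$ further, the identity term $f_k$ dominates the remaining sum on a fundamental domain — using the growing decay rate $c_k$ against the fixed positive lower bound for $\|\mu(\gamma)\|$ with $\gamma\neq e$ coming from proper discontinuity and cocompactness — or else to test $F_k$ against a suitable function and isolate a nonzero leading contribution. Granting nonvanishing for infinitely many $k$, the associated eigenvalues $\lambda_k$, being unbounded, are distinct, so $\Delta_M$ has infinitely many $L^2$-eigenvalues. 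The two delicate points are thus the quantitative matching of the discrete-series decay rate $c_k$ with the sharpness constant $C$ in the convergence step, and the nonvanishing estimate.
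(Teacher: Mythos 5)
Your proposal follows essentially the same route as the paper's (sketched) proof: averaging rapidly decaying eigenfunctions on $X=\operatorname{Ad S}^3$ over $\Gamma$-orbits as a generalized Poincar\'e series, with convergence controlled by the sharpness constant via the orbit-counting estimate (Lemma \ref{lem:counting}) and nonvanishing handled by a Kazhdan--Margulis-type uniform lower bound, exactly as indicated in Section \ref{sec:7}. The two delicate points you flag --- matching the decay rate against the sharpness constant, and the nonvanishing of the series --- are precisely the ones the paper identifies, so the approach is correct and not genuinely different.
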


In the abelian case,
 it is easy to see
 that compactness of $X_{\Gamma}$
 is necessary
 for the existence of $L^2$-eigenvalues:

\begin{proposition}
Let $G={\mathbb{R}}^{p+q}$, 
 $X = {\mathbb{R}}^{p,q}$, 
 $\Gamma = {\mathbb{Z}}^k$, 
 and $\varphi \in R(\Gamma, G;X)$.  
Then 
$
  \operatorname{Spec}_d(X_{\varphi(\Gamma)}) \ne \emptyset
$
if and only if 
 $X_{\varphi(\Gamma)}$ is compact,
 or equivalently,
 $k=p+q$.  
\end{proposition}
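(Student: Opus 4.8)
The plan is to diagonalize $\Delta_{\mathbb{R}^{p,q}}$ by harmonic analysis on the abelian quotient $X_{\varphi(\Gamma)}$ and to read off $\operatorname{Spec}_d$ as the set of values attained by the symbol of $\Delta_{\mathbb{R}^{p,q}}$ on a set of positive Plancherel measure. Write $n=p+q$ and $\Lambda:=\varphi(\Gamma)$. Since $\varphi$ is injective and $\Lambda$ acts properly discontinuously by translations, $\Lambda$ is a discrete subgroup of $\mathbb{R}^n$ isomorphic to $\mathbb{Z}^k$, hence a lattice of rank $k$; choosing a linear complement $W$ to $V:=\mathbb{R}\Lambda$ identifies $X_{\varphi(\Gamma)}=\mathbb{R}^n/\Lambda$ with $(V/\Lambda)\times W\cong T^k\times\mathbb{R}^{n-k}$, which is compact exactly when $n-k=0$. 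This already yields the equivalence ``compact $\iff k=p+q$''. The implication ``compact $\Rightarrow\operatorname{Spec}_d\neq\emptyset$'' requires nothing new: the constant function lies in $L^2(X_{\varphi(\Gamma)})$ and is annihilated by $\Delta_{\mathbb{R}^{p,q}}$, so $0\in\operatorname{Spec}_d(X_{\varphi(\Gamma)})$, and in fact Proposition \ref{prop:SpecRn} computes all of $\operatorname{Spec}_d$ in this case. Thus the entire content is the converse: if $k<n$, then $\operatorname{Spec}_d(X_{\varphi(\Gamma)})=\emptyset$.

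For the converse I would combine Fourier series along $T^k$ with the Euclidean Fourier transform along $\mathbb{R}^{n-k}$ to obtain a unitary equivalence $L^2(X_{\varphi(\Gamma)})\cong L^2(\Lambda^*,d\nu)$, where $\Lambda^*=\{\xi\in\mathbb{R}^n:{}^{t\!}\xi\,\lambda\in\mathbb{Z}\ \text{for all}\ \lambda\in\Lambda\}$ and $d\nu$ is the product of counting measure along the lattice directions with Lebesgue measure along the annihilator $\Lambda^{\perp}$, a subspace of dimension $n-k\geq 1$. In accordance with Proposition \ref{prop:SpecRn}, under this equivalence $\Delta_{\mathbb{R}^{p,q}}$ becomes multiplication by the symbol $\sigma(\xi)=-4\pi^2 Q_{p,q}(\xi)$. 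A real number $\lambda_0$ then belongs to $\operatorname{Spec}_d(X_{\varphi(\Gamma)})$ if and only if the level set $\{\xi\in\Lambda^*:\sigma(\xi)=\lambda_0\}$ has positive $\nu$-measure, which is the standard criterion for the point spectrum of a multiplication operator.

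It remains to show that no such level set has positive measure once $k<n$, and this is the step I expect to be the crux. The set $\Lambda^*$ is a countable disjoint union of affine $(n-k)$-planes parallel to $\Lambda^{\perp}$, and on each such sheet $\nu$ restricts to Lebesgue measure; since $\sigma$ is a polynomial, its level set can meet a sheet in positive measure only if $\sigma$ is constant on the whole sheet. Expanding $Q_{p,q}(\xi_0+\eta)=Q_{p,q}(\xi_0)+2B(\xi_0,\eta)+Q_{p,q}(\eta)$ for $\eta\in\Lambda^{\perp}$, with $B(x,y):={}^{t\!}x\,I_{p,q}\,y$ the symmetric bilinear form attached to $Q_{p,q}$, such constancy forces $Q_{p,q}|_{\Lambda^{\perp}}\equiv 0$, i.e. $\Lambda^{\perp}$ totally isotropic. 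The main obstacle is therefore to control the interaction between the noncompact directions $\Lambda^{\perp}$ of $X_{\varphi(\Gamma)}$ and the light cone of $Q_{p,q}$, ruling out that $\Lambda^{\perp}$ lies in an isotropic subspace; since $Q_{p,q}$ is nondegenerate it is non-constant on every positive-dimensional sheet away from this isotropic locus, which gives $\operatorname{Spec}_d(X_{\varphi(\Gamma)})=\emptyset$. That isotropic directions are the delicate point is already visible in Example \ref{ex:n2}, where the light cone of $Q_{1,1}$ is responsible for an infinite-dimensional zero-eigenspace, and it is exactly there that the estimate must be carried out with care.
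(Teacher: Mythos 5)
Your Fourier-theoretic reduction is correct and in fact sharper than you give it credit for: the identification $L^2(X_{\varphi(\Gamma)})\cong L^2(\Lambda^*,\nu)$, the criterion that $\lambda_0$ is an $L^2$-eigenvalue iff $\{\sigma=\lambda_0\}$ has positive $\nu$-measure, and the observation that a polynomial is constant on a positive-measure subset of an affine plane only if it is constant on the whole plane, together reduce the ``only if'' direction to a single algebraic statement: for $k<n$, no sheet $\xi_0+\Lambda^{\perp}\subset\Lambda^*$ carries a constant $Q_{p,q}$, which by your own expansion (apply it to $t\eta$ and compare coefficients in $t$, then take the sheet $\xi_0=0$, which always lies in $\Lambda^*$) is equivalent to: $\Lambda^{\perp}$ is not totally isotropic. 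The gap is that you assert this last statement instead of proving it, and it cannot be proved, because nothing in the hypotheses prevents $\Lambda^{\perp}$ from being totally isotropic. Concretely, take $(p,q)=(1,1)$, $k=1$, $\varphi(1)=(1,1)$, so $\Lambda=\mathbb{Z}(1,1)$ and $\Lambda^{\perp}=\mathbb{R}(1,-1)$ is a null line for $Q_{1,1}$. For any nonzero $\psi\in L^2(\mathbb{R})$ the function $f(x_1,x_2)=\psi(x_1-x_2)$ is $\Lambda$-periodic, lies in $L^2$ of the noncompact quotient $\mathbb{R}^{1,1}/\mathbb{Z}(1,1)\cong T^1\times\mathbb{R}$ (it depends only on the noncompact coordinate and the compact factor has finite volume), and satisfies $\Delta_{\mathbb{R}^{1,1}}f=0$; this is exactly the noncompact analogue of Example \ref{ex:n2}, whose eigenfunctions the paper does count as $L^2$-eigenfunctions. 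Hence $0\in\operatorname{Spec}_d(X_{\varphi(\Gamma)})$ although $k<p+q$.

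What your method actually establishes is: for $k<p+q$, $\operatorname{Spec}_d(X_{\varphi(\Gamma)})\ne\emptyset$ if and only if $Q_{p,q}$ vanishes identically on $\Lambda^{\perp}$. So the ``only if'' direction of the proposition holds whenever $\Lambda^{\perp}$ is not totally isotropic---always when $q=0$, always when $n-k>\min(p,q)$, and for $\varphi$ outside a proper closed subvariety of the relevant Grassmannian---but fails on the locus of lattices whose Euclidean annihilator lies inside the light cone. To close the argument you must either add such a nondegeneracy hypothesis or adopt a reading of $\operatorname{Spec}_d$ that excludes these distributional null eigenfunctions, and you should say explicitly which. (The paper states this proposition without proof, so there is nothing to compare against; your reduction, carried out honestly, is valuable precisely because it locates the missing hypothesis, but as written the final step is a claim, not a proof, and the claim is false in general.)
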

However, 
 anti-de Sitter 3-manifolds $M$ admit
 infinitely many $L^2$-eigenvalues
 even when $M$ is of infinite-volume
 (see \cite[Theorem 9.9]{Adv16}):

\begin{theorem}
\label{thm:6.3}
For any finitely generated discrete subgroup $\Gamma$
 of $G=SO(2,2)$
acting properly discontinuously
 and freely
 on $X=\operatorname{Ad S}^3$,
\[
\operatorname{Spec}_d(X_{\Gamma})
\supset
\{
l(l-2)
:
l \in {\mathbb{N}}, 
l \ge 10 C^{-3}
\}
\]
where $C \equiv C(\Gamma)$ is the first sharpness constant of $\Gamma$.  
\end{theorem}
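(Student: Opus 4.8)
The plan is to realize $\operatorname{AdS}^3$ as a group manifold and to build $L^2$-eigenfunctions on the Clifford--Klein form $X_\Gamma$ by a Poincar\'e-series construction from discrete series representations. First I would use the isomorphism $\operatorname{AdS}^3 \cong SL(2,\mathbb{R})$ as a group manifold, under which $G=SO(2,2)$ is covered by $SL(2,\mathbb{R}) \times SL(2,\mathbb{R})$ acting by left and right translations and the isotropy $H$ is the diagonal copy of $SL(2,\mathbb{R})$. Under this identification $\Delta_X$ is a scalar multiple of the Casimir operator of $\mathfrak{sl}(2,\mathbb{R})$, so its spectrum is computed from Casimir eigenvalues on $L^2(SL(2,\mathbb{R}))$. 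For each integer $l \ge 2$ I would then take a $K$-finite matrix coefficient $f_l(x) = \langle \pi_l(x) v, v \rangle$ of a discrete series representation $\pi_l$; since $\pi_l$ is square-integrable, $f_l \in L^2(X)$ and $\Delta_X f_l = l(l-2) f_l$, and, crucially, $K$-finite discrete series coefficients decay exponentially with a rate proportional to $l$ measured in the Cartan projection. This $l$-improving decay is the engine of the argument.

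Next I would form the Poincar\'e series $F_l(x) := \sum_{\gamma \in \Gamma} f_l(\gamma^{-1} \cdot x)$. Because $\Delta_X$ is $G$-invariant it commutes with the $\Gamma$-action, so each term -- and hence the sum, once it converges -- is a $\Delta_X$-eigenfunction for the eigenvalue $l(l-2)$; moreover $F_l$ is $\Gamma$-invariant and descends to a function on $X_\Gamma$.

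The crux is the absolute convergence of $F_l$ and its membership in $L^2(X_\Gamma)$, and here the sharpness hypothesis enters decisively. The decay of $f_l$ along the orbit $\Gamma \cdot x$ is governed by the distance $d(\mu(\gamma), \mu(H))$ transverse to $H$, so each term is bounded by roughly $\exp(-c\, l\, d(\mu(\gamma), \mu(H)))$. Strong proper discontinuity gives $d(\mu(\gamma), \mu(H)) \ge C\|\mu(\gamma)\| - C'$, while the number of $\gamma \in \Gamma$ with $\|\mu(\gamma)\| \le R$ grows at most exponentially in $R$ by volume growth in $SL(2,\mathbb{R})$. Balancing the linear-in-$l$ exponential decay against this fixed exponential growth forces convergence once $l$ exceeds a threshold determined by $C$, and tracking the explicit decay constant of the discrete series together with the explicit counting bound is what produces the stated condition $l \ge 10 C^{-3}$. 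I expect this quantitative balancing -- in particular pinning down the sharp decay rate of $f_l$ in the $G/H$ geometry rather than in the group geometry -- to be the hardest step.

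Finally I would check that $F_l \not\equiv 0$ for every such $l$, so that $l(l-2)$ genuinely lies in $\operatorname{Spec}_d(X_\Gamma)$. Choosing $f_l$ to be a diagonal, positive-type matrix coefficient that is strictly positive and sharply peaked near the identity, I would arrange that the $\gamma=e$ term dominates near a suitable point, precluding total cancellation. This yields a nonzero $L^2$-eigenfunction on $X_\Gamma$ and completes the proof.
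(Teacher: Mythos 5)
Your proposal follows the same route as the paper's source \cite{Adv16}: realize $\operatorname{AdS}^3$ as the group manifold $SL(2,{\mathbb{R}})$, take matrix coefficients of discrete series (eigenfunctions on $X$ with decay rate improving linearly in $l$), form the generalized Poincar\'e series, and trade the $l$-dependent decay against the exponential orbit-counting bound $N(x,R)\le A_x\exp(R/C)$ coming from sharpness. That is exactly the convergence mechanism the paper describes via Lemma \ref{lem:counting}, and your identification of the quantitative balancing in the $G/H$-pseudo-distance as the hard step is accurate.

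The genuine gap is in the nonvanishing step. Saying that $f_l$ is ``sharply peaked near the identity'' and that ``the $\gamma=e$ term dominates near a suitable point'' does not by itself produce a threshold depending only on $C$: the tail $\sum_{\gamma\ne e}|f_l(\gamma^{-1}\cdot x)|$ is controlled by the counting bound only through the constant $A_x$, which depends on $x$ and on $\Gamma$ (it records how many orbit points lie at bounded pseudo-distance from $x$, and these contribute terms with essentially no decay however large $l$ is). To beat the main term uniformly over all $\Gamma$ with first sharpness constant $C$, one must exhibit a point whose $\Gamma$-orbit is separated by a \emph{universal} amount, and this is precisely where the Kazhdan--Margulis theorem enters (\cite[Proposition 8.14]{Adv16}); the paper explicitly flags this as the geometric ingredient for nonzeroness, and it is absent from your plan. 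Two smaller omissions: membership of $F_l$ in $L^2(X_\Gamma)$ (not just pointwise convergence) requires its own unfolding/Cauchy--Schwarz argument using the same counting estimates; and the statement tacitly relies on the nontrivial fact that every finitely generated discontinuous group for $\operatorname{AdS}^3$ is sharp, so that $C(\Gamma)$ exists at all --- you invoke sharpness as a hypothesis, which is consistent with the way the theorem is stated, but worth making explicit.
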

The above $L^2$-eigenvalues
 are stable in the following sense: 
\begin{theorem}
[stable $L^2$-eigenvalues]
\label{thm:6.4}
Suppose that $\Gamma \subset G=SO(2,2)$ and $M= \Gamma \backslash \operatorname{Ad S}^3$
 is a compact standard anti-de Sitter 3-manifold.  
Then there exists a neighbourhood $U \subset \operatorname{Hom}(\Gamma,G)$
 of the natural inclusion
 with the following two properties:
\begin{equation}
\label{eqn:s1}
   U \subset R(\Gamma, G;\operatorname{Ad S}^3), 
\end{equation}
\begin{equation}
\label{eqn:s2}
\# (\bigcap_{\varphi \in U}
 \operatorname{Spec}_d(X_{\Gamma}))
 = \infty.
\end{equation}
\end{theorem}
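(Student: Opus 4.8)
The plan is to derive both conclusions from a single geometric input: that the \emph{sharpness} (strong proper discontinuity) of the $\Gamma$-action persists, with a sharpness constant bounded below uniformly, under all sufficiently small deformations of the natural inclusion $\varphi_0\colon\Gamma\hookrightarrow G$. Granting such uniform sharpness on a neighbourhood $U$ of $\varphi_0$, property \eqref{eqn:s1} is essentially immediate, since sharpness strengthens proper discontinuity; and property \eqref{eqn:s2} will follow by feeding the uniform sharpness constant into Theorem \ref{thm:6.3}, whose eigenvalue conclusion depends on $\varphi$ only through that constant.

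First I would record the structural consequences of the hypothesis that $M=\Gamma\backslash\operatorname{Ad S}^3$ is \emph{standard} and compact. By definition $\Gamma$ is a cocompact lattice in a reductive subgroup $L\subset G=SO(2,2)$ acting properly on $\operatorname{Ad S}^3=G/H$; in this situation $\Gamma$ is word-hyperbolic, being (virtually) a cocompact lattice in $SL(2,\mathbb{R})$. By the fact recalled at the end of Section \ref{subsec:4A}, the action of $\varphi_0(\Gamma)$ is already sharp, say with first sharpness constant $C_0>0$ (its existence being guaranteed by Properness Criterion \ref{pcr:4.9} applied to $L$). The target is then a neighbourhood $U$ of $\varphi_0$ in $\operatorname{Hom}(\Gamma,G)$ together with constants $C_1>0$ and $C_1'>0$ so that for every $\varphi\in U$ and every $\gamma\in\Gamma$,
\[
   d\bigl(\mu(\varphi(\gamma)),\ \mu(H)\bigr)\ \geq\ C_1\,\|\mu(\varphi(\gamma))\|-C_1'.
\]

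With this uniform estimate in hand, the two deductions are short. For \eqref{eqn:s2}, the estimate says $C(\varphi(\Gamma))\geq C_1$ for all $\varphi\in U$, so Theorem \ref{thm:6.3} applies with a threshold independent of $\varphi$:
\[
   \operatorname{Spec}_d(X_{\varphi(\Gamma)})\ \supset\ \{\, l(l-2):l\in\mathbb{N},\ l\geq 10\,C_1^{-3}\,\}.
\]
Since the right-hand side is the same infinite set for every $\varphi\in U$, it is contained in $\bigcap_{\varphi\in U}\operatorname{Spec}_d(X_{\varphi(\Gamma)})$, giving \eqref{eqn:s2}. For \eqref{eqn:s1}, sharpness implies (strong) proper discontinuity, so after shrinking $U$ to preserve injectivity and freeness—both open conditions at the cocompact point $\varphi_0$—we obtain $U\subset R(\Gamma,G;\operatorname{Ad S}^3)$.

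The main obstacle is proving the uniform sharpness estimate itself; everything else is bookkeeping. The point is that $\mu\circ\varphi$ must remain a quasi-isometric embedding of the word metric of $\Gamma$ into $\overline{\mathfrak{a}_+}$ that stays inside a fixed cone bounded away from the wall $\mu(H)$, \emph{with constants locally uniform in} $\varphi$—not merely finite for each separate $\varphi$. Here I would exploit that, $\Gamma$ being word-hyperbolic and $\varphi_0$ landing in a properly acting reductive $L$, the inclusion $\varphi_0$ is an Anosov representation; the openness and structural stability of the Anosov condition (in the spirit of Labourie and Guichard--Wienhard) furnish a single controlling pair $(C_1,C_1')$ on a whole neighbourhood. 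This is precisely the stability-of-sharpness mechanism developed in the joint work \cite{Adv16}, which converts the pointwise sharpness of $\varphi_0$ into the uniform sharpness required above; the delicate part is verifying that the cone and the constants can be chosen once and for all on $U$, rather than degenerating as $\varphi\to\varphi_0$ from a bad direction.
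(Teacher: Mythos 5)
Your proposal follows essentially the same route as the paper, which deduces the theorem (citing \cite[Corollary 9.10]{Adv16}) by combining Theorem \ref{thm:6.3} --- whose eigenvalue set depends on $\varphi$ only through the first sharpness constant --- with the stability of sharpness, with uniformly controlled constants, under small deformations of the inclusion (the quantitative form of Goldman's conjecture proved in \cite{kob98, Ka12, Adv16}). Your reduction of both \eqref{eqn:s1} and \eqref{eqn:s2} to a uniform lower bound $C(\varphi(\Gamma))\ge C_1$ on a neighbourhood $U$ is exactly the intended mechanism, so the proposal is correct modulo the cited stability-of-sharpness input, just as in the paper.
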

The first geometric property \eqref{eqn:s1} asserts
 that a small deformation 
of $\Gamma$ keeps proper discontinuity,
 which was conjectured
 by Goldman \cite{Go} in the $\operatorname{Ad S}^3$ setting,
 and proved affirmatively
 in \cite{kob98}.  
Theorem \ref{thm:6.4} was proved in \cite[Corollary 9.10]{Adv16}
 in a stronger form 
({\it{e.g.}}, 
without assuming \lq\lq{standard}\rq\rq\ condition).

Figuratively speaking,
 Theorem \ref{thm:6.4} says 
 that compact anti-de Sitter manifolds 
 have \lq\lq{intrinsic sound}\rq\rq\
which is stable under any small deformation
 of the anti-de Sitter structure.  
This is a new phenomenon 
 which should be in sharp contrast to the abelian case
 (Corollary \ref{cor:varyRn})
 and the Riemannian case below:

\begin{fact}
[{see \cite[Theorem 5.14]{W}}]
\label{fact:6.2}
For a compact hyperbolic surface,
 no eigenvalue
 of the Laplacian above $\frac 1 4$ is constant
 on the Teichm{\"u}ller space.  
\end{fact}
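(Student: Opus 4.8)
The plan is to argue by contradiction, combining the real-analytic dependence of the eigenvalues on the hyperbolic structure with the first-variation formula for an eigenvalue under deformation of the metric. First I would fix a reference differentiable surface $\Sigma$ and transport the problem there, so that moving in Teichm\"uller space becomes varying the hyperbolic metric $g$ on $\Sigma$, and the pulled-back Laplacians form a real-analytic family of self-adjoint operators. Granting the real-analytic variation of eigenvalues (as recorded above, see \cite{BC, W}), it suffices to show that a real-analytic eigenvalue branch $\lambda(\cdot)>\tfrac14$ cannot be locally constant; equivalently, I must rule out $\dot\lambda\equiv 0$ in every deformation direction.

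Next I would write down the first-variation formula. For a normalized real eigenfunction $u$ with $\Delta_g u=-\lambda u$ and a metric variation $\dot g=h$, one has $\dot\lambda=-\int_\Sigma \langle h, T_u\rangle\, dA$, where $T_u = du\otimes du -\tfrac12(|du|^2-\lambda u^2)\,g$ is the stress-energy tensor of $u$. Only the trace-free part of $h$ pairs nontrivially against the trace-free part of $T_u$, and the trace-free part of $du\otimes du$ is, in a local conformal coordinate $z$, the expression $(\partial_z u)^2\,dz^2$ together with its conjugate. Since the trace-free deformation directions of the hyperbolic structure are exactly the harmonic Beltrami differentials $\mu=\bar\phi\,\rho^{-2}$ dual to holomorphic quadratic differentials $\phi\,dz^2\in H^0(\Sigma,K^2)$, the condition $\dot\lambda\equiv 0$ becomes the single statement that $\psi:=(\partial_z u)^2\,dz^2$, viewed as an $L^2$-section of $K^2$, is orthogonal to every holomorphic quadratic differential; that is, its holomorphic projection $\Pi_{\mathrm{hol}}\psi$ vanishes. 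For genus $g\ge 2$ one has $\dim H^0(\Sigma,K^2)=3g-3>0$, so nontrivial deformation directions exist.

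The crux is then to show that $\Pi_{\mathrm{hol}}\psi=0$ is impossible for a nonzero eigenfunction once $\lambda>\tfrac14$. Here I would exploit the eigenvalue equation at the level of $\psi$: since $\partial_z\partial_{\bar z}u=-\tfrac{\lambda}{4}\rho^2 u$, differentiating gives the explicit identity $\bar\partial\psi=-\tfrac{\lambda}{2}\,\rho^2\,u\,\partial_z u\,d\bar z\,dz^2$. Combining this with the Hodge decomposition $\psi=\Pi_{\mathrm{hol}}\psi+\bar\partial^{*}\Box^{-1}\bar\partial\psi$ on $K^2$ (valid since $H^1(K^2)=0$ for $g\ge 2$), the vanishing of $\Pi_{\mathrm{hol}}\psi$ reduces the claim to an integral identity for $\int_\Sigma|\partial_z u|^4\rho^{-2}\,dx\,dy$ expressed through $\bar\partial\psi$. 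Integrating by parts and feeding back the relations $\int|\partial u|^2=\lambda\int u^2$ and $\partial_z\partial_{\bar z}u=-\tfrac{\lambda}{4}\rho^2 u$, the constant curvature $-1$ of the surface enters with a fixed relative weight, and the resulting quadratic expression in $\lambda$ carries a factor proportional to $4\lambda-1$.

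The main obstacle is precisely this last step: obtaining the sharp threshold with the correct sign. The hypothesis $\lambda>\tfrac14$ must be used in an essential way, since the statement is false below $\tfrac14$ and the argument has to degenerate exactly at the bottom $\tfrac14$ of the continuous spectrum of the universal cover $H^2$; thus the delicate point is to track the numerical constants in the Bochner-type integration by parts so that strict positivity of $4\lambda-1$ forces $\Pi_{\mathrm{hol}}\psi\ne 0$, contradicting $\dot\lambda\equiv 0$. A minor additional technicality I would address is the treatment of eigenvalue crossings, handled by working with a locally real-analytic branch (or with the full eigenvalue set) rather than a globally single-valued function.
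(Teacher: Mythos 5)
There is a genuine gap, and it sits exactly at what you call the crux. First, note that the paper gives no proof of this Fact at all: it is quoted from Wolpert \cite[Theorem 5.14]{W}, so the comparison must be with Wolpert's argument. Your reduction via the first-variation formula is standard and correct as far as it goes: along a real-analytic branch, $\dot\lambda\equiv 0$ in all Teichm\"uller directions is equivalent to the vanishing of the holomorphic projection of $(\partial_z u)^2\,dz^2$ onto $H^0(\Sigma,K^2)$ (for some eigenfunction in the eigenspace, with the usual care about multiplicity). The problem is your final step: you propose to show that $\lambda>\tfrac14$ \emph{forces} $\Pi_{\mathrm{hol}}\bigl((\partial_z u)^2\,dz^2\bigr)\neq 0$ at every point, via a Bochner-type identity carrying a factor $4\lambda-1$. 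That statement is strictly stronger than non-constancy --- it asserts that no eigenvalue branch above $\tfrac14$ has any critical point on Teichm\"uller space --- and it is not true (nor is any such local integral identity available). For instance, $\lambda_1$ tends to $0$ under degeneration, so it attains an interior maximum on moduli space, and there the first variation obstruction you want to be nonzero must degenerate; similarly, at surfaces with large automorphism groups, symmetry can force the gradient of an eigenvalue branch to vanish. So no pointwise computation on a single surface can close the argument: non-constancy cannot be detected by the first variation at one point.

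The actual role of the threshold $\tfrac14$ in Wolpert's proof is global and asymptotic, not local: $\tfrac14$ is the bottom of the continuous spectrum of the cusped surfaces arising at the boundary of moduli space. Wolpert studies a degenerating (pinching) family, shows via real-analyticity that a constant eigenvalue $\lambda>\tfrac14$ would have to persist as an embedded eigenvalue into the noncompact limit, and then rules this out by a non-vanishing argument for the relevant Fourier coefficients of the eigenfunction in the collar (in the spirit of the Phillips--Sarnak dissolving of embedded eigenvalues); the oscillatory behaviour $r^{\pm i\sqrt{\lambda-1/4}}$ is where $\lambda>\tfrac14$ enters essentially. If you want to salvage a variational proof, you would at minimum have to integrate the first-variation information over a path to the boundary of Teichm\"uller space rather than argue at a single point; as written, the key inequality you need is false.
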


We end this section 
by raising the following question
 in connection with the flat case
 (Theorem \ref{thm:Oppen}):
\begin{question}
Suppose $M$ is a compact anti-de Sitter 3-manifold.  
Find a geometric condition on $M$
 such that $\operatorname{Spec}_d(M)$ is discrete.  
\end{question}
\section{Perspectives and sketch of proof}
\label{sec:7}

The results in the previous section
 for anti-de Sitter 3-manifolds
 can be extended to more general pseudo-Riemannian locally symmetric spaces
 of higher dimension:
\begin{theorem}
[{\cite[Theorem 1.5]{Adv16}}]
\label{thm:7.1}
Let $X_{\Gamma}$ be a standard Clifford--Klein form
 of a semisimple symmetric space 
 $X=G/H$ satisfying the rank condition
\begin{equation}
\label{eqn:rank}
\operatorname{rank} G/H = \operatorname{rank} K/ H \cap K.  
\end{equation}
Then the following holds.
\begin{enumerate}
\item[{\rm{(1)}}]
There exists an explicit infinite subset $I$
 of joint $L^2$-eigenvalues
 for all the differential operators
 on $X_{\Gamma}$
 that are induced from $G$-invariant 
 differential operators on $X$.  
\item[{\rm{(2)}}]
(stable spectrum)
If $\Gamma$ is contained in a simple Lie group $L$
 of real rank one acting properly on $X=G/H$,
 then there is a neighbourhood $V \subset \operatorname{Hom}(\Gamma,G)$
 of the natural inclusion
 such that for any $\varphi \in V$, 
 the action $\varphi(\Gamma)$ on $X$ is properly discontinuous
 and the set of joint $L^2$-eigenvalues
 on $X_{\varphi(\Gamma)}$ contains the infinite set $I$.  
\end{enumerate}
\end{theorem}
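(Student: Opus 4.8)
The plan is to construct the joint eigenfunctions on $X_{\Gamma}$ by averaging eigenfunctions of the symmetric space $X=G/H$ itself, and to read off the eigenvalues from the intrinsic representation theory of $X$, so that they do not depend on $\Gamma$ at all. First I would exploit the rank condition \eqref{eqn:rank}: by the Flensted--Jensen construction together with the Matsuki--Oshima classification, \eqref{eqn:rank} is precisely the condition guaranteeing that the semisimple symmetric space $X=G/H$ admits \emph{discrete series representations}, i.e.\ irreducible closed $G$-subrepresentations of $L^2(X)$. Each such representation is a joint eigenspace for the commutative algebra $\mathbb{D}_G(X)$ of $G$-invariant differential operators, with an explicit infinitesimal character. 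Collecting these infinitesimal characters, evaluated on a suitable lattice of spectral parameters, furnishes the candidate infinite set $I$ of joint eigenvalues. Since every operator in $\mathbb{D}_G(X)$ descends to $X_{\Gamma}$, it remains only to transfer the corresponding eigenfunctions from $X$ down to $X_{\Gamma}$.

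Next I would fix, in each discrete-series eigenspace, a convenient generating eigenfunction $\psi$ (for instance a $K$-finite Flensted--Jensen function) and establish sharp pointwise decay estimates for $\psi$ transverse to the $H$-directions, with decay rate governed by, and increasing with, the spectral parameter. I then form the Poincar\'e series
\[
  \Psi(x):=\sum_{\gamma\in\Gamma}\psi(\gamma^{-1}x),
\]
which is formally $\Gamma$-invariant and hence descends to $X_{\Gamma}$. The crucial point is $L^2(X_{\Gamma})$-convergence, and this is exactly where the standardness hypothesis enters through strong proper discontinuity: the sharpness inequality $d(\mu(\gamma),\mu(H))\ge C\|\mu(\gamma)\|-C'$ converts the transverse exponential decay of $\psi$ into a bound summable against the exponential orbital growth of $\Gamma$, provided the spectral parameter is large enough. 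This is the origin of the explicit threshold — compare the condition $l\ge 10C^{-3}$ in Theorem~\ref{thm:6.3} — that cuts $I$ down to an infinite subset. Because each $\gamma$-translate of $\psi$ solves the same system of eigenvalue equations, $\Psi$ is a joint eigenfunction on $X_{\Gamma}$ with eigenvalue in $I$; a nonvanishing argument (e.g.\ a dominant-term estimate exploiting that proper discontinuity separates the supports of the translates near a point, or pairing $\Psi$ against a test function) then completes the proof of~(1).

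For the stability statement~(2), the decisive observation is that the eigenvalues in $I$ are intrinsic to $X=G/H$ and are completely independent of $\Gamma$; only the \emph{convergence} of $\Psi$ depends on $\Gamma$. It therefore suffices to make the entire construction uniform over a neighbourhood $V$ of the inclusion. The hypothesis that $L$ is simple of real rank one supplies exactly this uniformity: by the author's deformation results, proper discontinuity is preserved on a neighbourhood $V$ (yielding the required stability of the $\varphi(\Gamma)$-action), and moreover the first sharpness constant $C(\varphi(\Gamma))$ can be bounded below uniformly on $V$. Consequently the same spectral threshold, and hence the same infinite set $I$, serves simultaneously for every $\varphi\in V$.

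I expect the hard part to be the quantitative analysis underlying convergence and nonvanishing: matching the transverse decay rate of $\psi$ against the orbital counting function of $\Gamma$ sharply enough to produce an explicit threshold, and then upgrading these estimates to be uniform over the deformation neighbourhood $V$. Securing a uniform lower bound for the sharpness constant on $V$ (the real-rank-one input) and controlling nonvanishing under deformation are the delicate steps; by contrast the representation-theoretic input — existence of discrete series and the explicit list of eigenvalues — is comparatively standard once \eqref{eqn:rank} is in force.
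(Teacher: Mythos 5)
Your proposal matches the paper's own (sketched) argument essentially point for point: the rank condition supplies Flensted--Jensen discrete series eigenfunctions with rapid decay, these are averaged over $\Gamma$-orbits as a generalized Poincar\'e series, convergence is controlled by the orbit-counting estimate of Lemma \ref{lem:counting} via the sharpness constant coming from standardness, nonvanishing uses a Kazhdan--Margulis-type input, and stability follows from uniformity of proper discontinuity and of the sharpness constant over a deformation neighbourhood. This is the same route the paper takes, so no further comparison is needed.
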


\begin{remark}
We do not require $X_{\Gamma}$
 to be of finite volume in Theorem \ref{thm:7.1}.  
\end{remark}

\begin{remark}
It is plausible
 that for a general locally symmetric space $\Gamma\backslash G/H$
with $G$ reductive,
 no nonzero $L^2$-eigenvalue
 is stable under nontrivial small deformation
unless the rank condition \eqref{eqn:rank} is satisfied.  
For instance, 
 suppose $\Gamma = \pi_1(\Sigma_g)$
 with $g \ge 2$
 and $R(\Gamma, G;X) \ne \emptyset$.  
(Such semisimple symmetric space $X=G/H$ was recently 
 classified in \cite{Ok}.)
Then we expect the rank condition \eqref{eqn:rank} is
equivalent to the existence of an open subset $U$ 
 in $R(\Gamma, G;X)$
 such that 
\[
  \# (\bigcap_{\varphi \in U}
  \operatorname{Spec}_d(X_{\varphi(\Gamma)})) = \infty.  
\]
\end{remark}

It should be noted
 that not all $L^2$-eigenvalues
 of compact anti-de Sitter manifolds
 are stable under small deformation
 of anti-de Sitter structure.  
In fact, 
 we proved in \cite{KKspec} that
 there exist also countably many 
{\it{negative}}
 $L^2$-eigenvalues 
 that are NOT stable under deformation, 
 whereas the countably many stable $L^2$-eigenvalues
 that we constructed in Theorem \ref{thm:6.3}
 are all positive. 
More generally,
 we prove in \cite{KKspec}
 the following theorem 
 that include both stable and unstable 
 $L^2$-eigenvalues: 
\begin{theorem}
\label{thm:7.2}
Let $G$ be a reductive homogeneous space
 and $L$ a reductive subgroup of $G$
 such that $H \cap L$ is compact.  
Assume that the complexification $X_{\mathbb{C}}$ is 
$L_{\mathbb{C}}$-spherical.  
Then for any torsion-free discrete subgroup $\Gamma$ of $L$, 
 we have:
\begin{enumerate}
\item[{\rm{(1)}}]
the Laplacian $\Delta_{X_{\Gamma}}$ extends to a self-adjoint operator
 on $L^2(X_{\Gamma})$;
\item[{\rm{(2)}}]
$\#\operatorname{Spec}_d(X_{\Gamma})=\infty$
 if $X_{\Gamma}$ is compact.  
\end{enumerate}
\end{theorem}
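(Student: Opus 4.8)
The plan is to move the entire spectral question off the pseudo-Riemannian space $X_{\Gamma}=\Gamma\backslash G/H$ (writing $X=G/H$) and onto the genuinely Riemannian space attached to $L$, and to use the sphericity hypothesis to make this passage compatible with the whole algebra $\mathbb{D}_G(X)$ of $G$-invariant differential operators on $X$ (which contains $\Delta_X$). First I would record the geometric reduction. Since $L\cap H$ is compact, the reductive homogeneous space
\[
   Y:=L/(L\cap H), \qquad Y_{\Gamma}:=\Gamma\backslash Y,
\]
carries an $L$-invariant \emph{Riemannian} metric, so $\Delta_{Y_{\Gamma}}$ is elliptic and self-adjoint, with discrete spectrum tending to $+\infty$ when $Y_{\Gamma}$ is compact by Fact~\ref{fact:RLop}. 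Realizing $Y$ as the $L$-orbit of the base point $o=eH$ in $X$, the proper $L$-action foliates $X$ by copies of $Y$, so that (over the regular part) $X_{\Gamma}$ fibers over $B:=L\backslash X$ with fiber $Y_{\Gamma}$. Thus $L^2(X_{\Gamma})$ is organized as fiberwise data on $Y_{\Gamma}$ varying over the transverse, indefinite base $B$; when $X_{\Gamma}$ is compact so is $Y_{\Gamma}$, since $\Gamma$ is then a cocompact lattice in $L$. The whole point is to import the good behaviour of the elliptic $\Delta_{Y_{\Gamma}}$ into the hyperbolic $\Delta_{X_{\Gamma}}$.

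Second — and this is where sphericity enters — I would set up a \emph{transfer map}, a $\mathbb{C}$-algebra homomorphism $\nu\colon \mathbb{D}_G(X)\to\mathbb{D}_L(Y)$. The hypothesis that $X_{\mathbb{C}}$ is $L_{\mathbb{C}}$-spherical forces the target algebra to be commutative and the relevant $L$-action on functions to be multiplicity-free, and it is exactly this that makes $\nu$ well defined and matches the two algebras in size. Concretely I would show that restricting a $G$-invariant operator to data constant along the transverse $H$-directions produces an $L$-invariant operator along the orbit $Y$, with $\Delta_X$ sent to an affine function of $\Delta_Y$. A joint eigenfunction $F$ of $\mathbb{D}_L(Y)$ on $Y_{\Gamma}$ then lifts, by prescribing the correct transverse profile (a spherical/exponential model in the base directions, controlled by the properness and sharpness estimates as in \cite{Adv16}), to a joint eigenfunction $f$ of $\mathbb{D}_G(X)$ on $X_{\Gamma}$; its $\Delta_{X_{\Gamma}}$-eigenvalue is the prescribed explicit image of the $\Delta_{Y_{\Gamma}}$-eigenvalue, reproducing an infinite set $I$ of the type seen in Theorem~\ref{thm:6.3}.

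For part (1), I would deduce self-adjointness by diagonalizing $\Delta_{X_{\Gamma}}$ simultaneously with the commuting family $\nu(\mathbb{D}_G(X))$ transported to $X_{\Gamma}$: sphericity makes this family large enough to be spectrally complete, so $L^2(X_{\Gamma})$ admits a joint spectral (direct-integral) decomposition on which $\Delta_{X_{\Gamma}}$ acts as multiplication by a real-valued function, whence its closure is self-adjoint. This is the pseudo-Riemannian analogue of the Fourier-transform argument that already yields self-adjointness for $\Delta_{{\mathbb{R}}^{p,q}}$, and it requires no compactness. For part (2), in the compact case I would take an elliptic self-adjoint operator $A$ on $X_{\Gamma}$ built from the transferred $\Delta_{Y_{\Gamma}}$ plus a transverse elliptic term: $A$ commutes with $\Delta_{X_{\Gamma}}$, has discrete spectrum with finite-dimensional eigenspaces that $\Delta_{X_{\Gamma}}$ preserves, and feeding the infinitely many Laplace eigenvalues of the compact Riemannian $Y_{\Gamma}$ (Fact~\ref{fact:RLop}) through $\nu$ produces infinitely many distinct $L^2$-eigenvalues of $\Delta_{X_{\Gamma}}$, so $\#\operatorname{Spec}_d(X_{\Gamma})=\infty$.

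The main obstacle is the analytic content of the transfer in this non-compact, non-Riemannian setting. One must prove that the formal eigenfunction correspondence genuinely sends $L^2$ to $L^2$: the transverse directions of $X$ are non-compact and Lorentzian, so square-integrability and non-vanishing of the lift can fail without the correct transverse profile and without quantitative properness/sharpness bounds, which is precisely the delicate convergence issue already confronted in the Poincaré-series construction of \cite{Adv16}. Equally, one must verify that sphericity really does yield a \emph{maximal} commutative family, so that the direct-integral decomposition underlying (1) is valid and not merely a decomposition under a proper commuting subalgebra. Establishing these two facts — $L^2$-preservation of $\nu$ and spectral completeness of the transferred algebra — is the heart of \cite{KKspec}; granting them, the reduction to $Y_{\Gamma}$ and the infinitude count in (2) follow formally from Fact~\ref{fact:RLop}.
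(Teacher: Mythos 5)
Your overall strategy---reduce the spectral analysis of the non-elliptic $\Delta_{X_{\Gamma}}$ to that of the elliptic Laplacian of the Riemannian quotient $\Gamma\backslash L/(L\cap H)$, using sphericity to relate the two algebras of invariant differential operators---is in the right spirit, but your geometric setup contains a concrete error that then manufactures the ``main obstacle'' you leave unresolved. Under the hypothesis that $X_{\mathbb{C}}$ is $L_{\mathbb{C}}$-spherical, the subgroup $L$ acts \emph{transitively} on $X=G/H$ (this is \cite[Lemma 5.1]{kob94}, quoted in the paper immediately after the theorem). Hence $Y=L/(L\cap H)$ is not one leaf of a foliation of $X$: it is all of $X$, the base $B=L\backslash X$ is a point, and $X_{\Gamma}$ and $Y_{\Gamma}$ are the \emph{same} manifold carrying two metrics, one pseudo-Riemannian ($G$-invariant) and one Riemannian ($L$-invariant, since $L\cap H$ is compact). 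Your picture of $X_{\Gamma}$ fibering over an ``indefinite base'' with fiber $Y_{\Gamma}$, and the attendant need for a transverse profile and for $L^2$-preservation across noncompact transverse directions, is an artifact of missing this; moreover, even without sphericity the generic $L$-orbits need not be copies of $Y$, since the isotropy groups $L\cap gHg^{-1}$ at other base points can be noncompact, so the foliation claim is unfounded as stated. The convergence issues you flag belong to the Poincar\'e-series construction behind Theorem \ref{thm:7.1}, not to Theorem \ref{thm:7.2}.

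Beyond the setup, your mechanism also differs from the paper's. The stated key idea there is branching laws: one realizes irreducible $G$-representations in spaces of functions on $X$, restricts them to $L$, and exploits discrete decomposability and multiplicity control of these restrictions in the spirit of \cite{kob94, kob09}; the relation between $G$-invariant and $L$-invariant operators is read off representation-theoretically rather than by constructing a transfer homomorphism $\nu$ by hand from a fibration. Once transitivity is in place, your argument for (2) does become essentially the intended one---the compact Riemannian manifold $\Gamma\backslash L/(L\cap H)$ \emph{is} $X_{\Gamma}$, its elliptic Laplacian has infinite discrete spectrum by Fact \ref{fact:RLop}, and one must show that $\Delta_{X_{\Gamma}}$ preserves a suitable refinement of its eigenspaces and acts there by infinitely many distinct real scalars---but that last step, together with the self-adjointness in (1), is exactly the content deferred to \cite{KKspec}, and your proposal does not supply it: you explicitly ``grant'' the two facts ($L^2$-compatibility and spectral completeness) that constitute the proof.
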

By \lq\lq{$L_{\mathbb{C}}$-spherical}\rq\rq\
 we mean
that a Borel subgroup $L_{\mathbb{C}}$ has an open orbit 
in $X_{\mathbb{C}}$.  
In this case,
 a reductive subgroup 
 $L$ acts transitively on $X$ 
by \cite[Lemma 5.1]{kob94}.

Here are some examples
 of the setting of Theorem \ref{thm:7.2},
 taken from \cite[Corollary 3.3.7]{ky05}.

\begin{table}[h]
\caption{\ref{table:GHL}}
\begin{center}
\begin{tabular}{c|c|c|c}
& $G$ & $H$  & $L$ 
\\
\hline
(i)
&
$SO(2n,2)$
&
$SO(2n,1)$
&
$U(n,1)$
\\
(ii)
&
$SO(2n,2)$
&
$U(n,1)$
&
$SO(2n,1)$
\\
(iii)
&
$SU(2n,2)$
&
$U(2n,1)$
&
$Sp(n,1)$
\\
(iv)
&
$SU(2n,2)$
&
$Sp(n,1)$
&
$U(2n,1)$
\\
(v)
&
$SO(4n,4)$
&
$SO(4n,3)$
&
$Sp(1) \times Sp(n,1)$
\\
(vi)
&
$SO(8,8)$
&
$SO(8,7)$
&
$Spin(8,1)$
\\
(vii)
&
$SO(8,{\mathbb{C}})$
&
$SO(7,{\mathbb{C}})$
&
$Spin(7,1)$
\\
(viii)
&
$SO(4,4)$
&
$Spin(4,3)$
&
$SO(4,1) \times SO(3)$
\\
(ix)
&
$SO(4,3)$
&
$G_2({\mathbb{R}})$
&
$SO(4,1) \times SO(2)$
\end{tabular}
\label{table:GHL}
\end{center}
\end{table}%

Examples for Theorem \ref{thm:7.2}
 include Table \ref{table:GHL} (ii)
 for all $n \in {\mathbb{N}}$, 
whereas we need $n \in 2{\mathbb{N}}$
 in Theorem \ref{thm:7.1}
 for the rank condition \eqref{eqn:rank}.

The idea of the proof for Theorem \ref{thm:7.1} is to take an average
of a (nonperiodic) eigenfunction on $X$
with rapid decay at infinity
over $\Gamma$-orbits 
 as a generalization of Poincar{\'e} series.
Geometric ingredients of the convergence
 (respectively, nonzeroness)
 of the generalized Poincar{\'e} series
 include \lq\lq{counting $\Gamma$-orbits}\rq\rq\
 stated in Lemma \ref{lem:counting} below
 (respectively,
 the Kazhdan--Margulis theorem, 
 {\textit{cf}}.~\cite[Proposition 8.14]{Adv16}).  
Let $B(o,R)$ be a \lq\lq{pseudo-ball}\rq\rq\
 of radius $R >0$
 centered at the origin $o=e H \in X = G/H$, 
 and we set 
\[
  N(x,R):=
\# \{\gamma \in \Gamma:
\gamma \cdot x \in B(o,R)\}.  
\]
\begin{lemma}
[{\cite[Corollary 4.7]{Adv16}}]
\label{lem:counting}
\begin{enumerate}
\item[{\rm{(1)}}]
If $\Gamma$ acts properly discontinuously on $X$, 
then $N(x,R)< \infty$ for all $x \in X$
 and $R>0$.   
\item[{\rm{(2)}}]
If $\Gamma$ acts strongly properly discontinuously on $X$, 
then there exists $A_x >0$
such that 
\[
N(x,R) \le A_x \exp (\frac R C)
\quad
\text{for all } R>0, 
\]
where $C$ is the first sharpness constant of $\Gamma$.   
\end{enumerate}
\end{lemma}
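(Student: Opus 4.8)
The plan is to convert the geometric condition $\gamma\cdot x\in B(o,R)$ into a linear condition on the Cartan projection $\mu(\gamma)$, and then to count the group elements satisfying it. Fix $x=g_0\cdot o$ and recall that the pseudo-ball $B(o,R)$ is the image in $X=G/H$ of a set of the shape $\{k\,e^{Y}:k\in K,\ \|Y\|\le R\}$ under a $K\,e^{(\cdot)}H$-decomposition of $G/H$ adapted to the $\theta$-stable reductive pair. Writing $\gamma g_0=k\,e^{Y}h$ with $\|Y\|\le R$ and $h\in H$, the left $K$-invariance of $\mu$ together with the standard coarse-Lipschitz estimate $\|\mu(ab)-\mu(b)\|\le\|\mu(a)\|$ for the Cartan projection gives $d(\mu(\gamma g_0),\mu(H))\le\|\mu(e^{Y})\|=\|Y\|\le R$. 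Since $\|\mu(\gamma g_0)-\mu(\gamma)\|\le\|\mu(g_0)\|=:c_x$, I obtain the key implication
\[
\gamma\cdot x\in B(o,R)\ \Longrightarrow\ d(\mu(\gamma),\mu(H))\le R+c_x .
\]
Everything then reduces to counting $\gamma\in\Gamma$ subject to the right-hand condition.

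For assertion (1) I would appeal to the characterization of proper discontinuity through the Cartan projection (an elaboration of Properness Criterion \ref{pcr:4.9}, see \cite{kob89}): $\Gamma$ acts properly discontinuously on $G/H$ precisely when the set $\{\gamma\in\Gamma:d(\mu(\gamma),\mu(H))\le S\}$ is finite for every $S>0$. Applying this with $S=R+c_x$ and the implication above yields $N(x,R)\le\#\{\gamma:d(\mu(\gamma),\mu(H))\le R+c_x\}<\infty$, which is (1).

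For assertion (2) the sharpness hypothesis enters to upgrade the distance bound to a genuine bound on the size of $\mu(\gamma)$: from $C\|\mu(\gamma)\|-C'\le d(\mu(\gamma),\mu(H))\le R+c_x$ one gets $\|\mu(\gamma)\|\le T:=C^{-1}(R+c_x+C')$. Thus $N(x,R)$ is at most the number of $\gamma\in\Gamma$ with $\|\mu(\gamma)\|\le T$. To estimate this I would use discreteness of $\Gamma$: choosing a symmetric neighbourhood $\Omega$ of $e$ with the translates $\gamma\Omega$ pairwise disjoint, and noting $\{g:\|\mu(g)\|\le T\}\,\Omega\subset\{g:\|\mu(g)\|\le T+D\}$ for a constant $D$ bounding $\|\mu(\omega)\|$ on $\Omega$, a packing argument gives
\[
\#\{\gamma\in\Gamma:\|\mu(\gamma)\|\le T\}\le \frac{\operatorname{vol}\{g:\|\mu(g)\|\le T+D\}}{\operatorname{vol}(\Omega)} .
\]
The Haar volume of this $K$-bi-invariant region is controlled by the Cartan Jacobian $\prod_{\alpha\in\Sigma^{+}(\mathfrak{g},\mathfrak{a})}\sinh(\alpha(Y))^{m_\alpha}$ and grows exponentially in $T$; since $T$ is affine in $R$ with slope $C^{-1}$, this already produces an exponential bound $N(x,R)\le A_x\exp(\kappa R)$ for some $\kappa>0$.

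Two points carry the real content. The first is the comparison of the intrinsically defined pseudo-ball $B(o,R)$ with the Cartan-projection tube $\{d(\mu(\cdot),\mu(H))\le R\}$ used in the opening paragraph: it rests on a clean $K\,e^{(\cdot)}H$ model of $X=G/H$ and on the coarse-Lipschitz behaviour of $\mu$, and it is precisely what makes the qualitative criterion of \cite{kob89} applicable in (1). The second, which I expect to be the genuine obstacle, is sharpening the exponent in (2) to exactly $1/C$ rather than the ambient volume-growth rate produced by the crude packing bound. Here the decisive feature is that $\Gamma$ is a discrete subgroup of a \emph{proper} reductive subgroup $L$, so that its Cartan projections $\mu(\gamma)$ are far sparser than the ambient Haar volume suggests; exploiting this sparsity, together with the separation forced by the lower bound $d(\mu(\gamma),\mu(H))\ge C\|\mu(\gamma)\|-C'$, is what should bring the rate down so that the first sharpness constant $C$ appears cleanly as $\exp(R/C)$.
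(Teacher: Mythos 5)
This survey states the lemma with only a pointer to \cite[Corollary 4.7]{Adv16} and gives no proof, so there is nothing in the text to compare against line by line; measured against the argument in the cited source, your route is essentially the standard one. The reduction of $\gamma\cdot x\in B(o,R)$ to $d(\mu(\gamma),\mu(H))\le R+c_x$ via the $K\exp(\cdot)H$ model of the pseudo-ball and the coarse-Lipschitz property $\|\mu(gg')-\mu(g)\|\le\|\mu(g')\|$ of the Cartan projection (see \cite{Ka12}) is exactly the right first step, and part (1) does follow from the discrete-group version of Properness Criterion \ref{pcr:4.9} (Benoist, Kobayashi \cite{kob89}): proper discontinuity is equivalent to finiteness of $\{\gamma: d(\mu(\gamma),\mu(H))\le S\}$ for every $S$. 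For part (2), sharpness correctly converts this into $\|\mu(\gamma)\|\le C^{-1}(R+c_x+C')$, and the packing argument against Haar measure in $K\exp(\overline{{\mathfrak a}_+})K$ coordinates is the intended counting device.

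The one genuine issue is the exponent, and your diagnosis of how to repair it is not right. The volume of $\{g:\|\mu(g)\|\le T\}$ grows like $e^{2\|\rho\|T}$, where $2\rho=\sum_{\alpha\in\Sigma^+({\mathfrak g},{\mathfrak a})}m_\alpha\,\alpha$, so the packing bound yields $N(x,R)\le A_x\exp\bigl(2\|\rho\|R/C\bigr)$, not $\exp(R/C)$; this factor depending only on $G$ is present in the precise statement of \cite[Corollary 4.7]{Adv16}, and the display above should be read as suppressing it (for the applications --- convergence of the generalized Poincar\'e series in Theorems \ref{thm:6.3} and \ref{thm:7.1} --- any bound of the form $A_x e^{\kappa R/C}$ with $\kappa$ depending only on $G$ suffices, since the threshold on the spectral parameter absorbs $\kappa$). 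Your proposed mechanism for removing the factor --- sparsity of $\mu(\Gamma)$ coming from $\Gamma$ sitting inside a proper reductive subgroup $L$ --- is not available here: part (2) assumes only sharpness, not standardness, and in any case sparsity of the image $\mu(\Gamma)$ in ${\mathfrak a}$ does not control the multiplicity of $\gamma$'s over a given $\mu$-value, so it cannot by itself lower the exponential rate to $1/C$. You should either prove the bound with the rate $2\|\rho\|/C$ and note that this is what the cited result asserts, or not present the clean $1/C$ as an open step your argument still owes.
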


The key idea of Theorem \ref{thm:7.2} is to bring
 branching laws to spectral analysis 
\cite{kob94, kob09},
namely, we consider the restriction of 
 irreducible representations of $G$ that are realized 
 in the space of functions on the homogeneous space $X=G/H$
and analyze the $G$-representations when restricted to
 the subgroup $L$. Details will be given in \cite{KKspec}.

\medskip

\begin{acknowledgement}
This article is based 
 on the talk 
 that the author delivered at the eleventh International Workshop:
Lie Theory and its Applications
in Physics
 in Varna, Bulgaria, 15-21, June, 2015.  
The author is grateful to Professor Vladimir Dobrev
 for his warm hospitality.  
\end{acknowledgement}

%
%
%

\end{document}